\newtheorem{lem}{Lemma}
\newtheorem{thm}{Theorem}
\theoremstyle{definition}
\newtheorem{conj}{Conjecture}
\newtheorem{defn}{Definition}
\newtheorem{remark}{Remark}
\title{New horizons in multidimensional diffusion:\\
The Lorentz gas and the Riemann Hypothesis}
\author{Carl P. Dettmann, School of Mathematics, University of Bristol,
Bristol BS81TW, UK}
\date{\today}
\begin{document}
\maketitle

\begin{abstract}
The Lorentz gas is a billiard model involving a point particle diffusing
deterministically in a periodic array of convex scatterers.  In the two
dimensional finite horizon case, in which all trajectories involve
collisions with the scatterers, displacements scaled by the usual
diffusive factor $\sqrt{t}$ are normally distributed, as shown by Bunimovich
and Sinai in 1981.  In the infinite horizon case, motion is superdiffusive,
however the normal distribution is recovered when scaling by $\sqrt{t\ln t}$,
with an explicit formula for its variance. Here we explore the infinite horizon
case in arbitrary dimensions, giving explicit formulas for the mean square
displacement, arguing that it differs from the variance of the limiting
distribution, making connections with the Riemann Hypothesis in the small
scatterer limit, and providing evidence for a critical dimension $d=6$
beyond which correlation decay exhibits fractional powers.  The results are
conditional on a number of conjectures, and are corroborated by numerical
simulations in up to ten dimensions.
\end{abstract}

\section{Introduction}
The Lorentz gas is a billiard model~\cite{CM}, in which a point particle moves
in straight lines with unit speed except for specular collisions with hard scatterers, most
often disks or balls.  It is an ``extended'' billiard in that it moves in an unbounded domain,
in contrast to the usual billiard setting.  Lorentz developed an approach along these lines
to study electrical conduction
in 1905~\cite{L}, however periodic arrays of scatterers in the plane or in
higher dimensions appeared as early as 1873 (the Galton board, or
``quincunx''~\cite{G}). They have been a major example of hyperbolic dynamics
with singularities starting from the work of Sinai in 1970~\cite{Si}.

Lorentz gas models are considered in the physics literature, for
example arising as the molecular dynamics model with two atoms and a hard
potential after reduction
of the centre of mass motion~\cite{D}.  It is thus probably the simplest model
of deterministic diffusion with physical attributes such as a Hamiltonian
structure, continuous time dynamics and time reversal invariance; simpler but
less physical models include one dimensional maps on
$f:\mathbb{R}\to\mathbb{R}$ with a periodicity property $f(x+1)=f(x)+1$
~\cite{FG,SFK} and multibaker maps on $[0,1]^2\times\mathbb{Z}$~\cite{Ga}.  In
the former case, with $f$ a piecewise linear map, the diffusion constant is a
non-smooth function of parameters, as quantified in~\cite{KHK}. It would
be interesting to extend such results to billiards; numerical studies of Lorentz(-like)
models have suggested an irregular but somewhat smoother behaviour than the piecewise
linear map~\cite{HG01,HKG02,KD00,KK02}.

There is a substantial literature on diffusion in periodic Lorentz gases
in dimension $d=2$ in the setting of dispersing billiards, for example 
sufficiently smooth ($C^3$), convex with curvature bounded away from zero,
and an upper bound on the free flight length between scatterers (`finite horizon').
It is known that the displacements of particles initially distributed
uniformly (outside the scatterers) converges weakly to a Wiener process (Brownian motion)
with variance proportional to the (continuous) time $t$ as $t\to\infty$~\cite{BS};
proof of recurrence in the full space followed later~\cite{Conze,Sc}.
The
strategy is usually to relate the deterministic model to a Markovian
random walk process for which the results are known or easier to prove.

If we relax the finite horizon condition, the distribution remains Gaussian, but with a $t\ln t$
scaling.  The two dimensional case was considered by Zacherl et al.~\cite{ZGNR86} who gave
approximate formulas for the velocity correlation functions, and hence diffusion properties.
Bleher~\cite{B}, proved some of the necessary results, deriving some exact coefficients.
Detailed numerical results for velocity correlations were obtained by
Matsuoka and Martin~\cite{MM97}.  Sz\'asz and Varju~\cite{SV}, completed the proofs
in discrete time ($n$), obtaining a local limit law to a normal distribution, and
also recurrence and ergodicity in the full space.  Techniques there were based
on work of B\'alint and Gou\"ezel~\cite{BG} for the stadium billiard, which also
has long flights without a collision on a curved boundary, and also a normal
distribution with a nonstandard $n\ln n$ limit law.  Finally, Chernov and
Dolgopyat~\cite{CD} proved weak convergence to a Wiener distribution in
the continuous time case.

In general it is
easier to prove statements about correlation decay and diffusion for the
discrete time dynamics first; some diffusion results can then be transferred
to continuous time, as discussed in Ref.~\cite{B,CD} and Section~\ref{s:disc} below.
Correlation decay for billiards in continuous time have been
considered~\cite{BM,C07,M} but are in general more difficult.  Here we use
continuous time $(t)$ with connections to the discrete time diffusion
process discussed in Sec.~\ref{s:disc}.

The main purpose of this paper is to explore the infinite horizon case in
dimensions $d>2$, or more precisely, several varieties of infinite horizon 
that appear in higher
dimensions.  Recurrence is of course not to be expected, even for finite 
horizon,
however a (super)-diffusion coefficient and further refinements may still be
considered.  One case in which the diffusion in higher dimensional Lorentz gases has
been treated very recently is the Boltzmann-Grad limit~\cite{MS1,MS2}.  Here the
radius and density of scatterers scale so that the mean free path is constant.
Here we will also consider the limit of small scatterers, but only after the
infinite time limit.  We will nevertheless be able to make connection with an asymptotic
formula in~\cite{MS2}.  In addition, we will find connections
with the Riemann Hypothesis, along the lines of those presented for the escape
problem of a billiard inside a circle with one or two holes~\cite{BD}; this is also new for $d=2$.
Last but not least,
there are some outstanding issues in $d=2$ regarding
the diffusion coefficients defined using moments and using
a limiting normal distribution, which we need to clarify
first (Sec.~\ref{s:DXi}); these may also have wider implications for our understanding of
other anomalous diffusion processes.  Main results are stated and discussed in
Sec.~\ref{s:summary}, which concludes with a description of the following
more detailed sections.

\section{Diffusion coefficients}\label{s:DXi}
Different conventions are used in physical and mathematical literature for
diffusion, and there are further subtleties involved with the
definition of the logarithmic superdiffusion coefficient.
Fick's law, dating back to 1855, states that the flux of
particles in a diffusing species is proportional to the
concentration gradient.  If, as in the Lorentz gas, the diffusing
particles do not interact with each other, and the medium
through which they diffuse (periodic collection of
scatterers) is homogeneous at large scales but not necessarily isotropic,
from physical considerations we expect a macroscopic
equation of the form
\begin{equation}\label{e:diffeq}
\frac{\partial\rho}{\partial t}=D_{ij}\frac{\partial}{\partial \tilde x_i}\frac{\partial}{\partial \tilde x_j}\rho
\end{equation}
where $\rho(\tilde{\bf x},t)$ is the density of particles, $i$ and $j$ are spatial indices, we
are using the Einstein summation convention (summing repeated indices from $1$ to $d$), and we
denote quantities in the full (unbounded) space with tildes.  The $D_{ij}$ are components of the diffusivity matrix $\bf D$, which is positive semi-definite and symmetric.  For now we assume
that it is actually positive definite, corresponding
to the possibility of diffusion in all directions. In simple cases it is often isotropic, $D_{ij}=D\delta_{ij}$ where $D$ is ``the'' diffusion coefficient or diffusivity.  The Green's function of Eq.~(\ref{e:diffeq}) is a Gaussian
\begin{equation}
\rho(\tilde{\bf x},t)=\frac{e^{-\frac{D^{-1}_{ij}\tilde x_i\tilde x_j}{4t}}}
{\sqrt{4\pi t\det {\bf D}}}
\end{equation} 
involving the inverse matrix ${\bf D}^{-1}$. The second moment
of position with respect to this density is
\begin{equation}
\int\rho(\tilde{\bf x},t)\tilde x_i\tilde x_jd{\bf x}=2D_{ij}t
\end{equation}
The macroscopic picture may be connected to the
microscopic dynamics in terms of the displacement of a single particle $\Delta(t)=\tilde{\bf x}(t)-
\tilde{\bf x}(0)$.  If we assume the microscopic dynamics is periodic with respect to a lattice of
finite covolume, we can define a configuration variable $\bf x$ modulo lattice translations, and
consider the dynamics as a random process defined using a probability measure $\mu$ on its initial
position and velocity $({\bf x}(0),{\bf v}(0))$.  Expectations with respect to
this measure will be denoted by angle brackets $\langle\rangle$.  Depending on the details of the
dynamics, it may be possible to prove that in the $t\to\infty$ limit $\Delta$ has the appropriate second
moment
\begin{equation}
\lim_{t\to\infty}\frac{\langle \Delta_i\Delta_j\rangle}
{2t}=D_{ij}
\end{equation}
and/or that it converges weakly to a normal distribution
\begin{equation}
\frac{\Delta}{t}\Rightarrow {\cal N}(0,\Sigma_{ij})
\end{equation}
with zero mean and covariance matrix $\Sigma_{ij}=2D_{ij}$.
Convergence of the process to Brownian motion (ie multiple
time distributions) with the same covariance matrix has
also been shown in some cases.  Note in particular
the factors of 2.  We may want an overall diffusivity, generalising the isotropic case:
\begin{equation}
D=\frac{1}{d}{\rm tr}{\bf D}=\lim_{t\to\infty}\frac{1}{2dt}
\langle\Delta_i\Delta_i\rangle  
\end{equation}
for example Bleher~\cite{B} uses this for $d=2$.

The dynamics may lead to anomalous diffusion, where
the second moment may
not be linear in $t$.  Conventions differ as to how
the diffusivity is generalised.  Here we consider cases
where the second moment is logarithmic.  By analogy with
the normal distribution case above we define
\begin{equation}\label{e:super}
\lim_{t\to\infty}\frac{\langle\Delta_i\Delta_j\rangle}
{2t\ln t}={\cal D}_{ij},\qquad
{\cal D}=\frac{1}{d}{\rm tr}{\cal D}_{ij},\qquad
\frac{\Delta}{t\ln t}\Rightarrow {\cal N}(0,\Xi_{ij})
\end{equation}
where the relevant limits exist, but now it is no longer clear
whether ${\cal D}_{ij}$ is related to $\Xi_{ij}$, as noted
in~\cite{CESZ}.  Convergence in distribution does not imply
convergence of the moments, since the tails may decay slowly
while spreading further with time; see Fig.~\ref{f:dist}.
The authors of Ref.~\cite{CESZ} numerically
considered higher moments of $\Delta(n)/\sqrt{n\ln n}$ but
it is likely that recent methods~\cite{MT} will lead to a proof
of an anomalous exponent without a logarithm for these
higher moments and with a logarithm for lower
moments~\cite{Mpc}.  This proposed work does not include
our case of interest (second moment) which is borderline
between regimes of the normal and abnormal moments.
We continue the discussion of this anomalous moment below.

\begin{figure}
\includegraphics[width=400pt, height=250pt]{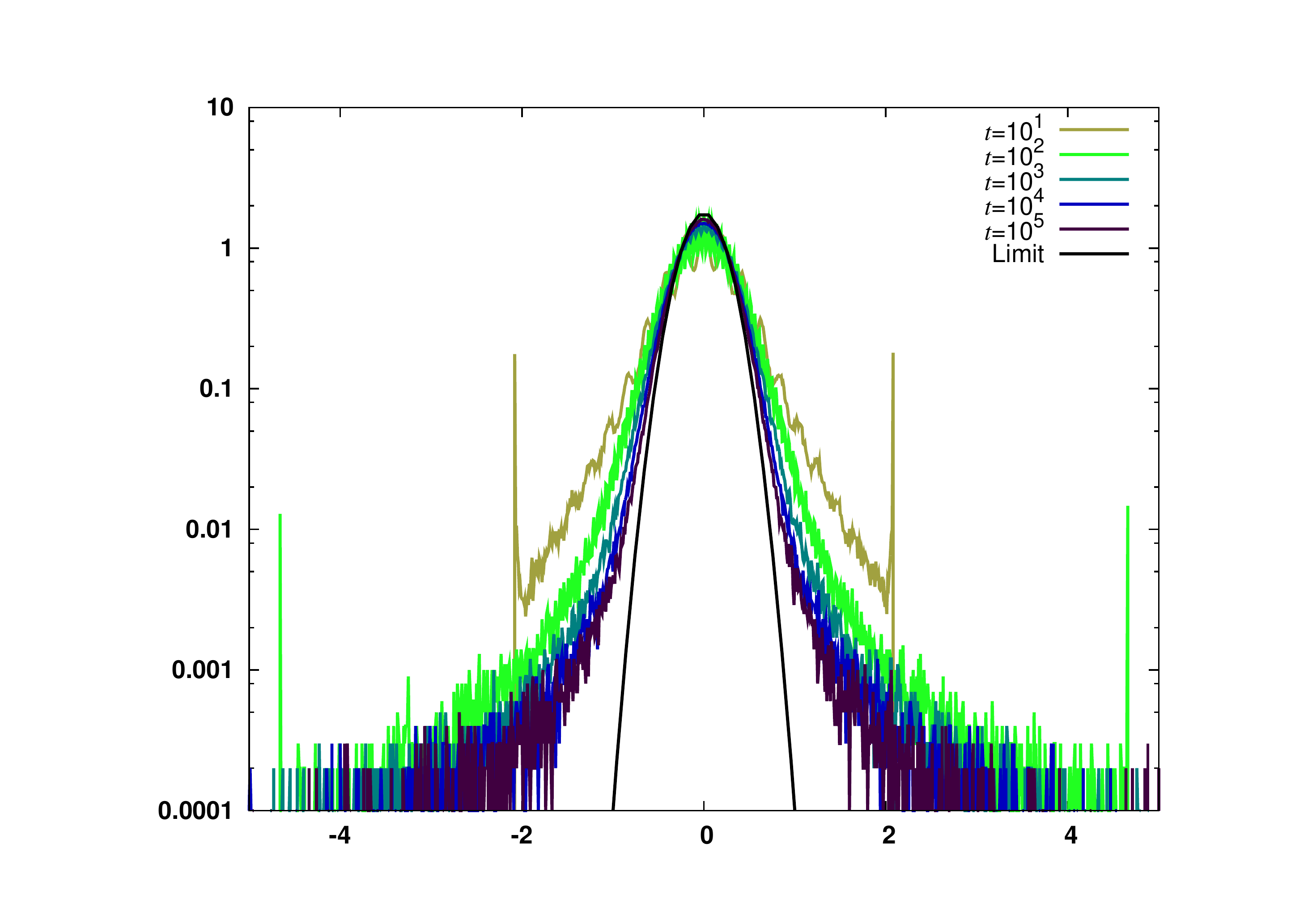}
\caption{\label{f:dist} The distribution of the scaled displacement $\Delta/\sqrt{t\ln t}$ for the
square ($d=2$) Lorentz gas with $r=0.4$, showing convergence to the expected normal
distribution, but with fat expanding tails at any finite time that support the argument in the
text for an anomolously large second moment.  Two features observed here at short times are
oscillations due to the lattice, and spikes arising from ballistic orbits with no collisions
that indicate the boundaries of the support of the distribution.}
\end{figure}

As with the normal
diffusion, we assume that superdiffusion is possible
in all directions; in general it is possible to have
normal diffusion in some directions and superdiffusion
in others, as discussed in~\cite{SV,CD}.  Also, the
definition for $\Xi$ makes sense with respect to discrete
(collision) time $n$ replacing $t$; we expect the relation
$\Xi^{\rm disc}=\tau\Xi$ with $\tau$ the
mean free time; see~\cite{B,CD} and Sec.~\ref{s:disc}.
Note however that ${\cal D}^{disc}$ is problematic since
$\langle\Delta^2\rangle$ is infinite for a single free path as well
as $n$ free paths, and hence not susceptible to normalisation
by any function of $n$.

Noting $\Delta_i=\int_0^t v_i(s)ds$ we find
(see Eq.~1.4 in Ref.~\cite{B}) the expression
\begin{equation}\label{e:GK}
\frac{1}{2}\langle\Delta_i\Delta_j\rangle
=t\int_0^t\langle v_i(0)v_j(s) \rangle ds
-\int_0^ts\langle v_i(0)v_j(s) \rangle ds
\end{equation}
which leads to the usual continuous time Green-Kubo
formula for $D_{ij}$ in the limit $t\to\infty$,
assuming the velocity autocorrelation function (VACF)
decays sufficiently rapidly.  If instead, we have
$\langle v_i(0)v_j(s)\rangle\sim C_{ij}/s$, we find
${\cal D}_{ij}=C_{ij}$.  See Ref.~\cite{M} for
rigorous results on continuous time decay of
correlations for this system.
Following Bleher~\cite{B}, we
note that in this (continuous time) case, the superdiffusion
arises from the slow decay of the VACF, but that in the
discrete case, the correlations decay rapidly, but
superdiffusion arises from the infinite
second moment of the displacement between collisions.

We now return to the specific case of the periodic
Lorentz gas and discuss existing results for $d=2$.
The Lorentz gas is a billiard in an unbounded domain with
a periodic collection of convex obstacles. If the free
path length is bounded away from zero and infinity (``finite
horizon''), the
diffusion is normal, as noted in the Introduction.  The
Green-Kubo formula is valid, but there is no closed form
expression for the correlation functions or diffusion coefficient.

In the infinite horizon case there are infinite strips
that do not contain any scatterers, termed ``corridors''
that lead to logarithmic superdiffusion as described
above.  These corridors can be enumerated, leading to
exact expressions for the superdiffusion coefficient.
We assume that there are at least two non-parallel
corridors, so that superdiffusion occurs in all
directions.

For a comparison of the coefficients existing in the literature,
we need only consider the specific case of a Lorentz gas with a square
lattice of unit spacing and disks of radius $1/\sqrt{8}<r<1/2$, for
example the case $r=0.4$ considered in numerical simulations below.
This has only corridors in the horizontal and vertical directions. 
Bleher~\cite{B} following Eq.~8.5 writes a discrete
superdiffusion coefficient (in our notation)
\begin{equation}
{\cal D}^{\rm disc}_{\rm Bleher}=\frac{w^2}{2\pi r}
\end{equation}
where $w=1-2r$ is the width of the corridor, and $2\pi r$ is
the circumference of a scatterer.  As noted above the second moment of
$\Delta$ is actually infinite.  Bleher attempts to circumvent this issue by
putting the $n\to\infty$ limit inside the average, but this also does
not make sense at face value.  If we make the interpretation
${\cal D}^{\rm disc}=\tau{\cal D}$ it agrees with the calculations presented here.

Szasz and Varju (\cite{SV}, Eq. (2)), and later Chernov and Dolgopyat (\cite{CD},
Eq. (2.1)), give for the components of the covariance matrix
\begin{equation}
{\Xi}^{\rm disc}_{ij}=\sum_{x}\frac{c_\mu w_x^2}{2|\psi(x)|}
\psi_i(x)\psi_j(x)
\end{equation}
where $x$ is a fixed point of the collision map of the reduced (unit cell) space,
assuming that each side of the corridor contains periodic copies of only one
scatterer; there are four of these for each corridor.
$c_\mu=1/(4\pi r)$ is the normalising factor for the invariant measure of this map,
equal to half the reciprocal perimeter of the billiard.
$w_x$ is the width of the corridor corresponding to $x$.
Finally $\psi(x)$ is the vector parallel to the corridor
giving the translation of $x$ under the map in the full
space, with components $\psi_i(x)$.
For the square lattice, $\psi(x)$ is a unit vector
aligned with the coordinate axes, and taking account of
the above mentioned factor of four, we find
\begin{equation}\label{e:Xi=D}
{\Xi}^{\rm disc}_{ij}=\frac{w^2}{2\pi r}\delta_{ij}
\end{equation}
Thus in contrast to the normal diffusion case it appears
that $\Xi\neq 2{\cal D}$, in fact $\Xi={\cal D}$.
This discrepancy is not noted in existing literature,
presumably due to the different notations used.  However,
Balint, Chernov and Dolgopyat have reportedly proposed it
for this (two dimensional) case, independently of and
prior to the present author, and claim to have a proof~\cite{C11}.

From Eq.~(\ref{e:GK}) we can (heuristically) explain the discrepancy as follows.  Given a random
initial condition (according to the usual continuous time invariant probability measure $\mu$),
there is a probability $\sim C/t$ that it will make a free flight for at least a long time $t$.
This is much higher than the exponentially small probability of travelling this distance
according to the normal distribution.  Thus the latter
can be accurate at best only to distances
\begin{equation}
\frac{e^{-x^2/(2\Xi t\ln t)}}{\sqrt{2\pi\Xi t\ln t}}\approx
\frac{C}{x}
\end{equation}
that is,
\begin{equation}\label{e:crossover}
x\approx \sqrt{\Xi t\ln^2 t}
\end{equation}
Orbits dominated by a free flight much longer than this are
well outside the converged part of the normal distribution
and thus cannot contribute to its variance.  Splitting the
integrals in Eq.~(\ref{e:GK}) into times less than, or greater
than $\sqrt{t}$, we find that the first term gives a
contribution to $\cal D$ from the orbits that
could contribute to the normal distribution of $(C/2)t\ln t$,
that is, exactly half its full value (modulo lower order
terms like $t\ln\ln t$ arising from the logarithm in Eq.~(\ref{e:crossover})).
Thus we expect $\Xi={\cal D}$ in agreement with Eq.~(\ref{e:Xi=D}).
Note that this argument depends on the specific
mechanism for the slow decay of the VACF, and may not
apply to all processes with logarithmic superdiffusion.  It should
however extend to the Lorentz gas in arbitrary dimension considered
for the remainder of this paper.

Thus we propose that the second moment is anomalous, specifically by a factor of two,
giving the equation $\Xi={\cal D}$ in two and higher dimensions. In $d=2$ we agree
with recent rigorous expression for $\Xi$ in Refs.~\cite{CD,SV} and Bleher's $\cal D$.
However, some of Bleher's discussion of ${\cal D}^{\rm disc}$ and $\Xi$ appears problematic.  

\section{Summary of results}\label{s:summary}

As discussed previously for the case $d=2$~\cite{B,CD,SV} superdiffusion in an infinite horizon Lorentz gas is a result of the unbounded path length of the particle, thus an understanding requires close study of the properties of these paths.  Here we extend the concept of a horizon (or ``channel'') as it appears in existing literature. Usual terminology defines free paths as bounded from
above (``finite horizon'') or infinite (``infinite horizon'') with occasional variations (``finite but unbounded'', also called ``locally finite'', for some aperiodic Lorentz gases).
Sanders~\cite{S} makes a finer classification
of infinite horizon billiards in three dimensions using the dimensionality of the horizon (cylindrical or planar horizon billiards).  Here we define a horizon, not as a type of bound or dimension, but as a set of points whose trajectories have no collisions with the scatterers, and satisfy some natural conditions.

Some definitions we need in this section (with more details in Sec.~\ref{s:def}) are as follows.
We consider the compact configuration space $M_x\subset\mathbb{R}^d/{\cal L}$ where the lattice
$\cal L$ of covolume $\cal V$ is the discrete subgroup of $\mathbb{R}^d$ defining the periodicity
of the Lorentz gas.  The scatterers cover a volume fraction ${\cal P}=1-|M_x|/{\cal V}$.
The continuous time dynamics $\Phi^t$ acts on the phase space $M=M_x\times M_v$ with
$M_v=\mathbb{S}^{d-1}$, the set of velocities of unit magnitude.  The set $M_t\subset M$
denotes initial conditions that do not collide with a scatterer for times in $[0,t]$.
A horizon $H\subset M_\infty$ is of the form $H=H_x\times H_v$ with $H_v=\mathbb{S}^{d-1}\cap V_H$
and $V_H\subset \mathbb{R}^d$ is a linear space of dimension $d_H\in\{1\ldots d-1\}$, the ``dimension
of the horizon.''  The set $H_x$ is connected and invariant under translations in $V_H$.
Projecting onto the $d-d_H$ dimensional orthogonal space $V_H^\perp$ we obtain a lattice
${\cal L}_H^\perp$ of covolume ${\cal V}_H^\perp$ and a set characterising the horizon $H_x^\perp$.
There is a natural invariant probability measure on $M$, denoted $\mu$.  The probability of
surviving without a collision for time $t$ is $F(t)=\mu(M_t)$, while the probability of remaining
within a horizon $H$ (or more precisely its spatial projection) is
\begin{equation}\label{e:F_H}
F_H(t)=\mu(\{({\bf x},{\bf v}):\Phi^s({\bf x},{\bf v})\in H_x\times M_v,\quad \forall s\in[0,t]\}).
\end{equation}

We make further distinctions, illustrated in Fig.~\ref{f:hor}:  A maximal horizon is one of highest dimension for the given billiard, a principal horizon is one of the highest dimension possible, $d-1$ (planar in Sanders' case) and an incipient horizon is one where $H_x^\perp$ has zero $d-d_H$ dimensional Lebesgue measure, hence $F_H(t)=0$ for all $t$.
For example, a principal horizon has a connected
one-dimensional set $H_x^\perp$ that is an interval, of length $w_H$, the ``width'' of the horizon.
When this interval shrinks to a point, as by increasing the size of the scatterers, the horizon
has scatterers touching it on both sides and becomes incipient. The set of maximal non-incipient
horizons is denoted $\cal H$.  Lemma~\ref{l:fin} in Sec.~\ref{s:lem} asserts that this set is finite. 

\begin{figure}
\centerline{
\includegraphics[width=175pt]{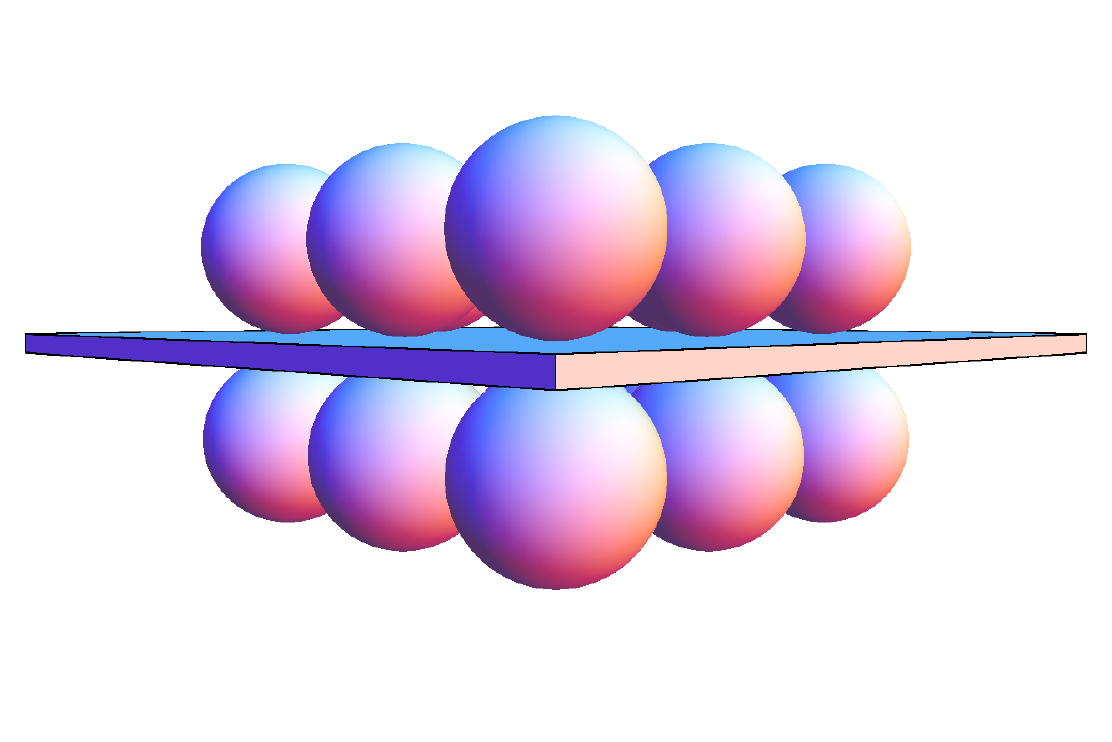}
\includegraphics[width=175pt]{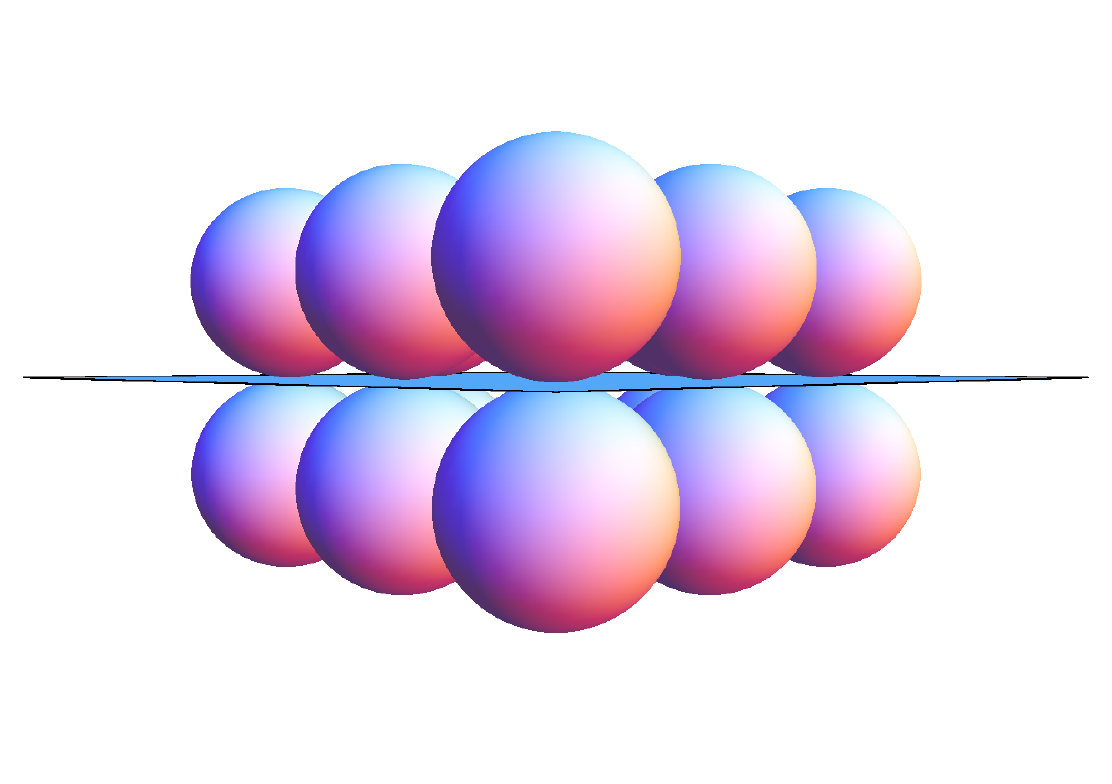}
\includegraphics[width=120pt]{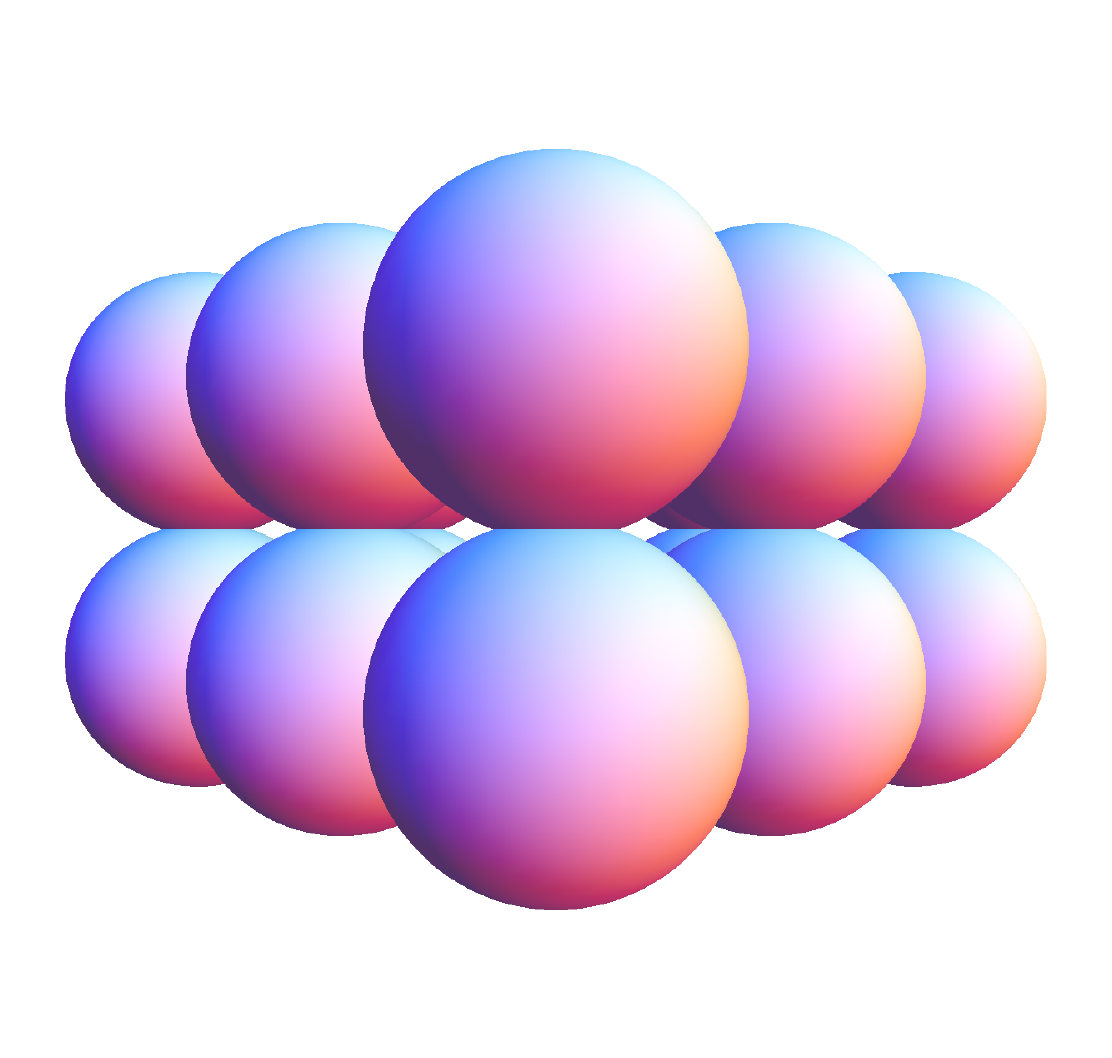}}
\caption{\label{f:hor} Horizons in the cubic Lorentz gas, depicted with a $3\times 3\times 2$
subset of the lattice, as viewed from
one corner, for different scatterer radii, and illustrating the
horizon terminology introduced in the text.  If the scatterers are small enough that they don't touch ($r<0.5$, left) there are planar horizons that are principal (and hence maximal) and not incipient.  When the scatterers touch ($r=0.5$, centre), they create an incipient principal horizon.  When the spheres overlap ($r>0.5$, right), cylindrical horizons are possible as seen in the figure; these are maximal but
not principal.}
\end{figure}

We will see that the decay exponent of $F(t)$ is determined by the dimensionality of the maximal
horizon with special conditions related to incipient horizons, and ${\cal D}_{ij}$ and ${\cal D}_{ij}^{\rm disc}$ are nonzero only for non-incipient principal horizons.  

Although much progress has been made on higher dimensional billiards in the last decade, some major challenges remain~\cite{Sz}.  Existing results are not sufficient to provide a fully unconditional foundation for the study of diffusion in higher dimensional Lorentz gases, thus our results depend
on the following conjectures.

First, we assert that the leading order collision-free behaviour is determined only by maximal horizons, except where these are all incipient.  This means in particular that incipient horizons, non-maximal horizons and overlaps between maximal horizons are all assumed to give subleading contributions.

\begin{conj}\label{c:hor}
Consider a periodic Lorentz gas with at least one
non-incipient maximal horizon.  Then we have
\begin{equation}
\sum_{H\in{\cal H}}F_H(t)\sim F(t)
\end{equation}
as $t\to\infty$.
\end{conj}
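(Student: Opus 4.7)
The plan is to prove the conjecture by establishing matching bounds $F(t) \leq \sum_{H \in \mathcal{H}} F_H(t)(1 + o(1))$ and $F(t) \geq \sum_{H \in \mathcal{H}} F_H(t)(1 + o(1))$ as $t \to \infty$. Write $A_H(t) = \{(x,v) : \Phi^s(x,v) \in H_x \times M_v \text{ for all } s \in [0,t]\}$, so that $F_H(t) = \mu(A_H(t))$ and each $A_H(t) \subset M_t$. Since $\bigcup_H A_H(t) \subset M_t$, the task reduces to showing (i) the overlap $\sum_{H \neq H'} \mu(A_H(t) \cap A_{H'}(t))$ is of strictly lower order than the individual $F_H(t)$, and (ii) the residual $\mu(M_t \setminus \bigcup_H A_H(t)) = o(F(t))$.

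For the overlap estimate (i), all horizons in $\mathcal{H}$ share the maximal dimension $d_{\max}$ by definition, so for any two distinct $H, H' \in \mathcal{H}$ the intersection $V_H \cap V_{H'}$ has dimension strictly less than $d_{\max}$. An orbit simultaneously trapped inside $H_x$ and $H'_x$ for time $t$ must carry velocity within angle $O(1/t)$ of $V_H \cap V_{H'}$, an angular set of measure $O(t^{-(d-\dim(V_H \cap V_{H'}))})$, which is strictly lower order than the maximal-horizon scaling $t^{-(d-d_{\max})}$. Incipient horizons are excluded from $\mathcal{H}$ and contribute nothing at leading order by definition. Together with the finiteness of $\mathcal{H}$ (Lemma~\ref{l:fin}), this packages all overlap contributions into a sum of strictly lower-order terms.

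For the residual estimate (ii), I would lift trajectories to the universal cover $\mathbb{R}^d$. An orbit surviving time $t$ corresponds to an unobstructed straight segment of length $t$ in the lifted Lorentz complement. As $t \to \infty$, the direction of such a segment must accumulate on one of the finitely many directions admitting a bi-infinite collision-free line; I would argue that these extremal directions are exactly the horizon directions $V_H \cap \mathbb{S}^{d-1}$ with $H \in \mathcal{H}$, and that a segment of length $t$ deviating from the nearest horizon direction by angle $\theta > C/t$ (with $C$ depending on the scatterer radius) must suffer transverse spreading sufficient to force a collision. Assigning each surviving orbit to its nearest horizon, and resolving ties by invoking the dimension hierarchy (non-maximal and non-principal candidates contribute at subleading order), then partitions $M_t$ up to a subleading correction and yields the upper bound.

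The main obstacle is the residual estimate (ii), specifically the claim that no competing mechanism matches the leading scaling $t^{-(d-d_{\max})}$. In $d=2$ this is essentially handled by Bleher's corridor enumeration~\cite{B} and is elementary. In $d \geq 3$, the scatterer array admits ``near-miss'' trajectories that slalom between spheres along directions with favourable Diophantine approximation to a horizon direction, and one must show that these do not produce an extra contribution at the same order; similarly, orbits grazing along incipient or non-principal maximal horizons need separate quantitative treatment. Such higher-dimensional geometric control is precisely the sort of issue flagged as an open challenge in~\cite{Sz}, and is why the statement is posed as a conjecture rather than proved here.
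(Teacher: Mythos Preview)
The paper does not prove this statement; it is explicitly posed as a conjecture, and the surrounding discussion (Remark~1 and the paragraphs following Lemma~\ref{l:fin} in Sec.~\ref{s:lem}) explains why. Your proposal correctly recognises this in its final paragraph, so what you have written is a proof strategy with an acknowledged gap rather than a proof. As such it cannot be judged ``correct'' or ``incorrect'' in the usual sense, but the decomposition you propose is sound and worth comparing with the paper's own discussion.

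Your step (i), the pairwise overlap estimate, is essentially right and matches the paper's informal remarks: two distinct maximal horizons intersect in a lower-dimensional velocity subspace, so the simultaneous confinement condition forces a strictly faster power-law decay. The paper does not spell this out but treats it as unproblematic (``double counts overlaps of the maximal horizons \ldots\ these lower dimensional effects do not affect the leading order'').

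Your step (ii) is where you and the paper diverge in emphasis. You locate the main difficulty in ``slalom'' trajectories with good Diophantine approximation to a horizon direction. The paper instead singles out \emph{incipient maximal horizons} as ``the main obstruction to proving it'' (Remark~1). The point is that an incipient maximal horizon $H$ has $F_H(t)=0$ and is therefore absent from $\sum_{H\in\mathcal{H}}F_H(t)$, yet orbits in a neighbourhood of $H_x$ can still survive for long times by exploiting the infinitely many $(d_H-1)$-dimensional horizons that such an incipient $H$ spawns (Sec.~\ref{s:def}, final paragraphs). Your sentence ``incipient horizons are excluded from $\mathcal{H}$ and contribute nothing at leading order by definition'' conflates their contribution to the \emph{sum} (zero by definition) with their contribution to $F(t)$ (which is precisely what must be bounded). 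The paper also notes that in $d>2$ Bleher's Prop.~2.2 fails outright --- long free segments need not be contained in a single corridor up to bounded ends --- which undercuts the ``assign each surviving orbit to its nearest horizon'' step more fundamentally than a Diophantine refinement would.
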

The initial sentence of the conjecture implies that $F(t)>0$ for all time as required by the right hand side of the equation.  The definition of periodic Lorentz gas considered in this paper is given
at the start of Sec.~\ref{s:def}.

\begin{remark} This conjecture describes only the horizons and not reflections from the scatterers.  It may not be necessary for the scatterers to be dispersing, as its content is purely geometrical, and
not dynamical.  The main obstruction to proving it is probably the effect of maximal incipient
horizons (see the discussion preceding Conj.~\ref{c:inc} below).
\end{remark}

\begin{remark} Where there are only incipient horizons, it is likely that $F(t)$ must decay to zero
at finite time.  We do not discuss this case here.
\end{remark}

Now we are ready to state the first main result:

\begin{table}
\centerline{
\begin{tabular}{ccccccc}\hline\hline
$d$&0&1&2&3&4&general\\\hline
$S_d$&2&$2\pi$&$4\pi$&$2\pi^2$&$\frac{8\pi^2}{3}$&$\frac{2\pi^{(d+1)/2}}{\Gamma((d+1)/2)}$\\
$V_d$&1&2&$\pi$&$\frac{4\pi}{3}$&$\frac{\pi^2}{2}$&
$\frac{\pi^{d/2}}{\Gamma((d+2)/2)}$\\
$G_d$&-&-&$\frac{1}{2\pi}$&$\frac{1}{4}$&$\frac{1}{\pi}$&
$\frac{\Gamma(d/2)}{2\sqrt{\pi}\Gamma((d-1)/2)}$     \\\hline\hline
\end{tabular}}
\caption{Lebesgue measure of $d$-dimensional spheres, $S_d$ and 
balls $V_d$, and the combination $G_d=S_{d-2}/2S_{d-1}$
appearing in Eq.~\protect\ref{e:sum} below, including specific values of interest
and the general formulae (last column).\label{t:Sd}}
\end{table}

\begin{thm}\label{th:F}
Consider a periodic Lorentz gas satisfying Conjecture~\ref{c:hor}.
Then the free flight function $F(t)$ satisfies
\begin{equation}
F(t)\sim\sum_{H\in\cal H}\frac{S_{d_H-1}
\int_{H_x^\perp}\int_{H_x^\perp}
\Delta^{\rm vis}({\bf x},{\bf y})d{\bf x}d{\bf y}}
{S_{d-1}{\cal V}_H^\perp(1-{\cal P})t^{d-d_H}}
\end{equation}
as $t\to\infty$.
\end{thm}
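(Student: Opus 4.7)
The plan is to invoke Conjecture~\ref{c:hor} immediately to reduce the problem to asymptotically evaluating $F_H(t)$ for each $H\in\mathcal{H}$ separately, then to perform the integral defining $F_H(t)$ by decomposing phase space with respect to the linear subspace $V_H$ associated with the horizon. The upshot is that survival for time $t$ forces the perpendicular velocity $\mathbf{v}_\perp\in V_H^\perp$ to be of size $O(1/t)$, which both localises the velocity integral near the $(d_H{-}1)$-sphere $H_v$ and produces, after rescaling, the visibility integral in the statement.

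First I would fix $H\in\mathcal{H}$ and write $\mu=d\mathbf{x}\,d\mathbf{v}/(\mathcal{V}(1-\mathcal{P})S_{d-1})$. Decompose $\mathbf{x}=\mathbf{x}_\parallel+\mathbf{x}_\perp$ with $\mathbf{x}_\perp\in V_H^\perp/\mathcal{L}_H^\perp$ and $\mathbf{v}=\mathbf{v}_\parallel+\mathbf{v}_\perp$. Since $H_x$ is invariant under translations by $V_H$, the indicator in (\ref{e:F_H}) depends only on $\mathbf{x}_\perp$ and $\mathbf{v}_\perp$, and integrating over the parallel fibre (a $d_H$-torus of volume $\mathcal{V}/\mathcal{V}_H^\perp$) collapses the position integral onto $H_x^\perp$. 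The survival condition then becomes the purely perpendicular requirement that $\mathbf{x}_\perp+s\mathbf{v}_\perp\in H_x^\perp$ for all $s\in[0,t]$.

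Next I would handle the velocity integral. Because $H_x^\perp$ has finite diameter, the survival condition imposes $|\mathbf{v}_\perp|=O(1/t)$, so the relevant portion of $\mathbb{S}^{d-1}$ is a thin tube about $H_v$. Parametrising this tube by $\mathbf{v}=\cos\theta\,\mathbf{u}+\sin\theta\,\boldsymbol{\omega}$ with $\mathbf{u}\in H_v$ and $\boldsymbol{\omega}\in\mathbb{S}^{d-d_H-1}$, the induced surface measure factors to leading order as $dS_{H_v}(\mathbf{u})\,d\mathbf{v}_\perp$, where $d\mathbf{v}_\perp$ is Lebesgue measure on $V_H^\perp$. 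The integrand is independent of $\mathbf{u}\in H_v$, producing a factor $S_{d_H-1}$. Then I would change variables from $\mathbf{v}_\perp$ to the endpoint $\mathbf{y}=\mathbf{x}_\perp+t\mathbf{v}_\perp$ at fixed $\mathbf{x}_\perp$, with Jacobian $t^{-(d-d_H)}$. The condition that $\mathbf{x}_\perp+s\mathbf{v}_\perp$ stay in $H_x^\perp$ for all $s\in[0,t]$ translates exactly into the visibility condition $\Delta^{\mathrm{vis}}(\mathbf{x}_\perp,\mathbf{y})=1$ (which automatically forces $\mathbf{y}\in H_x^\perp$). Multiplying all the factors yields the asymptotic formula for $F_H(t)$, and summing over $\mathcal{H}$ and applying Conjecture~\ref{c:hor} gives the result.

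The main obstacle I anticipate is rigorous control of the subleading errors in the velocity measure: one must show that the $(1-|\mathbf{v}_\perp|^2)^{(d_H-2)/2}$ type Jacobian corrections and the curvature of $\mathbb{S}^{d-1}$ transverse to $H_v$ contribute only at order $o(t^{-(d-d_H)})$, uniformly in $\mathbf{x}_\perp\in H_x^\perp$. A secondary subtlety is the behaviour of the visibility function $\Delta^{\mathrm{vis}}$ at the boundary of $H_x^\perp$, where trajectories graze tangentially; these contributions should vanish in the asymptotic limit but need a careful estimate. Beyond this, the decomposition is essentially an exercise in Fubini together with the geometry of the horizon.
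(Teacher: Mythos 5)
Your proposal is correct and takes essentially the same route as the paper: Conjecture~\ref{c:hor} reduces the problem to the per-horizon asymptotics, and your parallel/perpendicular decomposition, the $O(1/t)$ localisation of $\mathbf{v}_\perp$ giving the factor $S_{d_H-1}$, and the rescaling $\mathbf{y}=\mathbf{x}_\perp+t\mathbf{v}_\perp$ producing the visibility integral with Jacobian $t^{-(d-d_H)}$ is precisely the paper's Lemma~\ref{l:horsum}. The only ingredient you leave implicit is the paper's Lemma~\ref{l:fin}, which establishes that $\mathcal{H}$ is finite so that summing the per-horizon asymptotics over $\mathcal{H}$ is unproblematic.
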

Here, $\Delta^{\rm vis}({\bf x},{\bf y})$ is the ``visibility''
counting function, giving the (non-negative integer) number of ways $\bf x$ and $\bf y$
can be connected by a straight line entirely in $H_x^\perp$; for the specific examples
considered here, this is either zero or one.  $S_d$ is defined in Tab.~\ref{t:Sd}.

\proof{Conjecture~\ref{c:hor} asserts that only horizons in $\cal H$
need be considered to determine the asymptotic form of $F(t)$.  Lemma~\ref{l:fin} (below) shows that the sum over $\cal H$ is finite, and lemma~\ref{l:horsum} (below) gives the value of each term in the sum.
\hfill\qed}

Sec.~\ref{s:princ} gives the main application of this theorem, the expression~(\ref{e:sum}) for $F(t)$ in the case of the cubic Lorentz gas
with principal horizons.

If all maximal horizons are incipient,
we cannot apply Conj.~\ref{c:hor}.
However heuristic arguments and numerical simulations given in Sec.~\ref{s:inc} suggest in the case of a principal incipient
horizon, contributions from the infinite number of horizons of next lower dimension can be
summed, giving a $t^{-2}$ decay of $F(t)$, as long as $d<6$.  For $d\geq 6$ the sum of
contributions diverges, and it appears that $F(t)$ has a decay intermediate between $t^{-2}$ and $t^{-1}$.  Assuming a logarithm at the critical dimension
$d=6$ we can rather tentatively suggest:

\begin{conj}\label{c:inc}
The free flight survival probability for a Lorentz gas with incipient
(but no actual) principal horizon satisfies
\begin{equation}
F(t)\asymp\left\{\begin{array}{cr}
t^{-2}&d<6\\
t^{-2}\ln t&d=6\\
t^{-\alpha_d}&1<\alpha_d<2, \qquad d>6
\end{array}\right.
\end{equation}
\end{conj}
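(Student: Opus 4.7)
The plan is to formalize the heuristic of Sec.~\ref{s:inc}: when the principal horizons are incipient their contribution to Theorem~\ref{th:F} vanishes, and the leading asymptotics of $F(t)$ must come from the next lower-dimensional horizons of dimension $d_H = d-2$, each carrying a $t^{-(d-d_H)} = t^{-2}$ prefactor. I would parametrize these horizons by primitive rank-$(d-2)$ sublattices $\Lambda \subset \mathcal{L} = \mathbb{Z}^d$, equivalently (via orthogonal duality for the unimodular lattice $\mathbb{Z}^d$) by primitive rank-$2$ sublattices $\Lambda^* \subset \mathbb{Z}^d$ of common covolume $V = V(\Lambda) = V(\Lambda^*)$. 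The perpendicular space $V_H^\perp = \mathrm{span}_\mathbb{R}(\Lambda^*)$ is two-dimensional and the projection of $\mathbb{Z}^d$ onto it is a $2$D lattice of covolume $1/V$.

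The key geometric input is a dichotomy between ``generic'' and ``coordinate-aligned'' sublattices. For a generic $\Lambda^*$ of covolume $V > 1$ the projected lattice in $V_H^\perp$ has shortest vector $\asymp V^{-1/2}$, so the projected balls of radius $1/2$ tile the perpendicular plane and there is no free area; only the finitely many $V = 1$ sublattices contribute in this regime. The infinite family that dominates consists of the coordinate-aligned sublattices $\Lambda^* = \langle v, e_j \rangle$ with $v \in \mathbb{Z}^{d-1}$ primitive and $v \perp e_j$: the projected lattice is then a rectangular $|v|^{-1} \times 1$ grid, and a direct computation of the set of points at distance $> 1/2$ from every lattice point inside one fundamental cell gives a lens-shaped region of area
\begin{equation}
|H_x^\perp| = \frac{1}{|v|}\Bigl(1 - \sqrt{1 - |v|^{-2}}\Bigr) \;\sim\; \frac{1}{3\,|v|^3}
\end{equation}
as $|v| \to \infty$. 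Since the lens is convex, the visibility factor in Theorem~\ref{th:F} is trivial, and the coefficient of $t^{-2}$ contributed by a single horizon reduces to $\asymp |v|^{-5}$.

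Assembling all coordinate-aligned contributions gives
\begin{equation}
F(t) \;\sim\; C_d \Bigl( \sum_{v \in \mathbb{Z}^{d-1},\ \mathrm{primitive}} |v|^{-5} \Bigr)\, t^{-2}
\end{equation}
up to a constant depending on $d$ and $\mathcal{P}$. The number of primitive vectors in $\mathbb{Z}^{d-1}$ with $|v| \asymp R$ is $\asymp R^{d-1}$, so the dyadic shell contribution is $\asymp R^{d-6}$. This gives convergence and clean $t^{-2}$ decay when $d < 6$, marginal (logarithmic) divergence at $d = 6$ producing the conjectured $t^{-2} \ln t$ once the sum is cut off at the natural diffusion scale $R \asymp \sqrt{t}$, and genuine divergence for $d > 6$. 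In the last regime, the truncation must be self-consistent: only horizons whose lens is wide enough for a trajectory to remain inside up to time $t$ can contribute, and matching the truncated shell sum against $t^{-2}$ then picks out an exponent $\alpha_d \in (1,2)$.

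The main obstacle, I expect, is not the arithmetic but the justification of the reduction itself. Conjecture~\ref{c:hor} was stated only for the non-incipient case, and its extension to the incipient setting requires controlling contributions from long trajectories that pass through many narrowly-open cells in succession without belonging to any single horizon, together with uniformly ruling out the non-coordinate-aligned $\Lambda^*$ across all covolumes. At the critical dimension $d = 6$ one additionally needs to justify the precise form of the logarithmic correction, and for $d > 6$ the scaling argument that fixes $\alpha_d$ requires a delicate interchange of the $t \to \infty$ and infinite-sum limits; these are the points where I expect a rigorous proof (or a carefully supported simulation-based argument) to demand the most work.
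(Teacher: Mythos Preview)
Your heuristic follows essentially the same route as the paper's own argument in Sec.~\ref{s:inc}: reduce to the $(d-2)$-dimensional horizons sitting inside each incipient principal plane, parametrise them by primitive $\mathbf{l}\in\hat{\mathbb{Z}}^{d-1}$, show each contributes $\asymp L^{-5}t^{-2}$, and read off the critical dimension $d=6$ from (non-)convergence of the Epstein sum $\sum L^{-5}$. On that level the two accounts agree, and you correctly flag that the statement is a conjecture rather than a theorem: the paper does not prove it either, offering only this heuristic plus numerics.

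There is, however, a concrete geometric error. The perpendicular cross-section $H_x^\perp$ is \emph{not} a convex lens. In the $L^{-1}\times 1$ fundamental cell the free region is bounded by \emph{four} circles of radius $1/2$ centred at the corners; the pairs at height $0$ and $1$ overlap (for $L>1$) while the pairs at $x=0$ and $x=L^{-1}$ just touch. Scaling as the paper does, the limiting shape is the non-convex ``diamond''
\[
|\eta|<\bigl(\tfrac12-|\xi|\bigr)^2,\qquad |\xi|<\tfrac12,
\]
pinched to points at $\xi=\pm\tfrac12$. Consequently the visibility factor $\Delta^{\rm vis}$ in Lemma~\ref{l:horsum} is genuinely nontrivial: the paper evaluates the double integral numerically as $\approx 0.02746\,L^{-6}$, slightly below the area-squared value $L^{-6}/36$. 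Your formula $|H_x^\perp|=L^{-1}\bigl(1-\sqrt{1-L^{-2}}\bigr)$ is the area of the bounding rectangle, not of $H_x^\perp$ itself, and its asymptotic is $\sim\tfrac12 L^{-3}$, not $\tfrac13 L^{-3}$. None of this disturbs the $L^{-5}$ scaling or the critical dimension, but the claim that convexity trivialises the visibility integral is false and should be dropped.

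A smaller point: the paper does not attempt your general dichotomy over all rank-$2$ sublattices $\Lambda^*\subset\mathbb{Z}^d$; it works directly inside the coordinate incipient planes, which already yields the full infinite family. Your assertion that ``generic'' $\Lambda^*$ give no free area would need its own argument (the shortest-vector heuristic $\asymp V^{-1/2}$ is not uniform over all primitive rank-$2$ sublattices), and is in any case unnecessary for the heuristic as stated.
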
 

Finally we turn to the (super-)diffusion coefficient.  Here, a
further conjecture is required to establish estimates on velocity autocorrelation functions. Consider a flight of length $t$ much larger than the lattice spacing; this has an
angle from the horizon of order $t^{-1}$.  The particle
makes a grazing collision with a scatterer and reflects with a typical angle of order $t^{-1/2}$ so that the subsequent flight is typically of order $\sqrt{t}$ and further flights shorter still.  Thus after any time proportional to $t$,
we expect that many collisions have completely randomised
the trajectories position relative to the scatterer and velocity direction, but not position in the full lattice.
We state this as

\begin{conj}\label{c:corr}
Consider a periodic Lorentz gas with flow $\Phi^t$ and free flight function
$F(t)>0$ for all $t>0$.
Let $f,g:M\to\mathbb{R}$ denote zero-mean (with respect to
$\mu$) H\"older functions. Then we have
\[ \int_{M\backslash M_t}
(f)(g\circ\Phi^t)d\mu=o(F(t)) \]
as $t\to\infty$.
\end{conj}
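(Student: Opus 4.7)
The plan is to split $M \setminus M_t$ according to the first collision time $\tau(x)$ --- the smallest $s>0$ at which $\Phi^s(x)$ lies on a scatterer --- which on $M \setminus M_t$ is finite and bounded by $t$. Writing
\[
\int_{M \setminus M_t} f(x)\, g(\Phi^t x)\, d\mu(x) \;=\; I_A + I_B,
\]
with $A=\{x:\tau(x)\le t/2\}$ and $B=\{x:t/2<\tau(x)\le t\}$, I would treat the two regions by quite different arguments.

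For the early-collision regime $A$, the idea is to conjugate the flow to a suspension over the collision (Poincar\'e) map $T:\Sigma\to\Sigma$ and invoke (stretched-)exponential decay of correlations for $T$ --- proved in $d=2$ following Young, Chernov and B\'alint--Gou\"ezel, and conjectural in higher dimensions. After the first collision, the orbit has at least $t/2$ additional time and, by the grazing-collision cascade described in the paragraph preceding Conj.~\ref{c:corr} (free paths shrinking from $O(\sqrt{t})$ geometrically), undergoes many collisions before reaching time $t$. Approximating the H\"older functions $f$ and $g$ by observables measurable on a finite collection of $T$-cylinders, the decay-of-correlations estimate would yield $|I_A|\le\psi(t)$ with $\psi(t)/F(t)\to 0$, since the discrete-time decay rate is super-polynomial while $F$ is merely polynomial.

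The late-collision regime $B$ is the main obstacle. Its measure $F(t/2)-F(t)$ is of the same order as $F(t)$ itself, so a pointwise bound by $\|f\|_\infty\|g\|_\infty$ is useless and genuine cancellation must be extracted. For $x\in B$, the forward orbit on $[0,\tau(x)]$ is a nearly complete pass through some horizon $H$, parametrised up to higher-order corrections by its transverse position in $H_x^\perp$ and its direction in $V_H$; the collision at time $\tau(x)$ is grazing, scattering the outgoing velocity into a cone of half-angle $O(\tau(x)^{-1/2})$ about the horizon direction. I would integrate $g\circ\Phi^{t-\tau(x)}$ over this emerging cone and combine the zero mean of $g$ with its H\"older modulus together with the fact that for $t-\tau(x)<t/2$ the post-collision orbit already samples many lattice cells, in order to force the conditional average of $g$ to be $o(1)$ uniformly in $\tau(x)\in(t/2,t]$. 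Combined with $\mu(B)=O(F(t))$ this would give $|I_B|=o(F(t))$.

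The hard part is making this averaging step on $B$ quantitative: one needs an equidistribution statement for the post-grazing cone over time scales much smaller than $t$, uniform in the pre-collision parameters and strong enough to beat the $O(F(t))$ prefactor from $\mu(B)$. In $d=2$ such a statement is essentially implicit in the coupling constructions of Sz\'asz--Varju and Chernov--Dolgopyat for the discrete-time dynamics; in higher dimensions it would rest on further conjectural hyperbolic properties of the collision map on $\Sigma$, which is presumably why the present statement is recorded as a conjecture rather than a theorem.
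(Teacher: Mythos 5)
The statement you are trying to prove is not proved in the paper at all: it is stated as Conjecture~\ref{c:corr}, supported only by the heuristic grazing-collision argument in the paragraph preceding it and by the numerical comparison of $t\langle {\bf v}_0\cdot{\bf v}_t\rangle$ with $\lim_t tF(t)$ in Sec.~\ref{s:diff}. Your proposal likewise does not constitute a proof; it is a reduction to other unproven statements (higher-dimensional decay of correlations for the collision map, and a quantitative post-grazing equidistribution estimate on $B$), and you say so yourself. That is consistent with why the statement is a conjecture, but it means the proposal has genuine gaps rather than being an alternative proof.

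Beyond that, the treatment of the early-collision region $A=\{\tau\le t/2\}$ contains a concrete error. You claim that after the first collision the orbit ``undergoes many collisions before reaching time $t$'' by the grazing-cascade mechanism, and hence that $|I_A|$ decays super-polynomially. But the cascade argument in the paper applies to a particle that has just \emph{completed} a flight of length of order $t$, so that the collision is necessarily grazing; a generic first collision at some time $s\le t/2$ imposes no such constraint, and the orbit may immediately enter a horizon and fly freely past time $t$. The measure of such ``one early collision, then one long flight'' orbits is of order $F(t)$, not super-polynomially small, so they cannot be absorbed into $I_A$ by a crude correlation bound: they require exactly the kind of cancellation (zero mean of $f$ averaged over the widely spread pre-collision data, conditioned on the outgoing horizon flight) that you invoke only for $B$. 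There is also a technical obstruction in the same step: writing $I_A=\int_M (f\mathbf{1}_A)(g\circ\Phi^t)\,d\mu$ puts a $t$-dependent indicator with a complicated boundary into the observable, which is no longer H\"older, so the standard decay-of-correlations estimates (even where they are proven, $d=2$) do not apply with $t$-uniform constants as written. A decomposition by the time of the \emph{last} collision before $t$, or by whether a single free flight covers most of $[0,t]$, would isolate the difficulty more faithfully; as it stands, the split by first-collision time misplaces where the $O(F(t))$ contributions live, and the conjecture remains unproved by this route.
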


Note that the $\sqrt{t}$ motivating the above conjecture holds for the Lorentz gas, but not for other models such as extended stadium billiards. Also, a connection between
free flights and correlations for the three dimensional
cubic Lorentz gas was proposed as early as 1984~\cite{FM}.
We can now address the diffusion properties of the Lorentz gas:

\begin{thm}\label{th:D}
Consider a periodic Lorentz gas with non-incipient principal horizon.  Assume
Conjectures~\ref{c:hor} and ~\ref{c:corr}.  Then the super-diffusion matrix
is given by
\begin{equation}
{\cal D}_{ij}
=\frac{1}{1-{\cal P}}\frac{V_{d-1}}{S_{d-1}}
\sum_{H\in{\cal H}}\frac{w_H^2(\delta_{ij}-n_i(H)n_j(H))}{{\cal V}_H^\perp} 
\end{equation}
where ${\bf n}(H)$ is either of the two unit vectors normal to
all velocities in $H$.
\end{thm}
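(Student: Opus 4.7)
The plan is to start from the Green--Kubo-type identity~(\ref{e:GK}) and reduce the theorem to identifying the leading-$s$ asymptotic of the velocity autocorrelation $\langle v_i(0)v_j(s)\rangle$. As noted in the text after~(\ref{e:GK}), if $\langle v_i(0)v_j(s)\rangle\sim C_{ij}/s$ then ${\cal D}_{ij}=C_{ij}$; the goal is therefore to show that $C_{ij}$ equals the right-hand side of the claim.

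I split the correlation as $\int_{M_s} v_iv_j\,d\mu + \int_{M\setminus M_s} v_iv_j\,d\mu$. Since the velocity components have zero mean with respect to $\mu$ (by the ${\bf v}\mapsto-{\bf v}$ symmetry), Conjecture~\ref{c:corr} applies and the second integral is $o(F(s))=o(1/s)$, which is strictly subleading. On $M_s$ the particle moves freely, so $v_j(s)=v_j(0)$ and the remaining task is to evaluate the single-time integral $\int_{M_s} v_iv_j\,d\mu$ to leading order. I decompose this over horizons: following Conjecture~\ref{c:hor} and Theorem~\ref{th:F}, to leading order it is a sum over principal $H\in{\cal H}$ of $\int_{M_s\cap H}v_iv_j\,d\mu$, with incipient, non-maximal and overlap contributions absorbed into a subleading remainder.

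For each principal horizon $H$ with normal ${\bf n}={\bf n}(H)$, I use the coordinates that drive the proof of Theorem~\ref{th:F}: transverse position $p\in[0,w_H]$ in $V_H^\perp$, longitudinal position ${\bf q}\in V_H/({\cal L}\cap V_H)$ with total measure ${\cal V}/{\cal V}_H^\perp$, and a velocity parametrised by its angle $\theta$ from $V_H$ and a direction ${\bf u}\in\mathbb{S}^{d-2}\subset V_H$, so that $d\Omega=\cos^{d-2}\theta\,d\theta\,d\Omega_{d-2}({\bf u})$. Free flight for time $s$ translates into $-p/s\le\sin\theta\le(w_H-p)/s$ up to $O(1/s^2)$, an interval of $\theta$-length $w_H/s$ independent of $p$. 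In this small-$\theta$ region I expand $v_iv_j=u_iu_j+O(\theta)$ and $\cos^{d-2}\theta=1+O(\theta^2)$, use the standard identity $\int_{\mathbb{S}^{d-2}}u_iu_j\,d\Omega_{d-2}=\frac{S_{d-2}}{d-1}(\delta_{ij}-n_in_j)$, integrate $\theta$ and $p$ to gain a factor $w_H^2/s$, and divide by the $\mu$-normalisation $(1-{\cal P}){\cal V}S_{d-1}$. The Gamma-function identity $S_{d-2}/(d-1)=V_{d-1}$, immediate from the general formulas in Tab.~\ref{t:Sd}, converts the numerical prefactor into $V_{d-1}/S_{d-1}$, and summing over $H\in{\cal H}$ yields $C_{ij}$ equal to the ${\cal D}_{ij}$ of the claim.

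The main obstacle is the horizon decomposition of $\int_{M_s}v_iv_j\,d\mu$: Conjecture~\ref{c:hor} controls only the bare measure $F(t)$, so one must verify that the same partition applies with the position- and velocity-dependent weight $v_iv_j$, and in particular that contributions from incipient horizons, non-maximal horizons, overlaps, and grazing-boundary effects are $o(1/s)$ rather than merely $o(F(s))$. Tracking the $O(\theta)$ corrections in the angular expansion is more routine: each is supported on a $\theta$-interval of length $O(1/s)$, contributes at most $O(1/s^2)$ per horizon, and does not affect the leading coefficient.
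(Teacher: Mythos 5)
Your proposal is correct and follows essentially the same route as the paper's proof: use Conjecture~\ref{c:corr} to reduce the velocity autocorrelation to the free-flight set $M_s$, where ${\bf v}$ is constant, invoke Conjecture~\ref{c:hor} to decompose over the non-incipient principal horizons, and redo the Lemma~\ref{l:horsum} computation with the weight $v_iv_j$, whose integral over $H_v$ gives $(\delta_{ij}-n_in_j)V_{d-1}$, before feeding $C_{ij}$ into the Green--Kubo relation~(\ref{e:GK}). Your explicit angular coordinates and the identity $S_{d-2}/(d-1)=V_{d-1}$ simply spell out what the paper leaves implicit, and the extension of the horizon decomposition from the bare measure to the bounded weight $v_iv_j$ is the same (conjectural) step the paper also takes, controlled since $|v_iv_j|\le 1$ makes the remainder $o(F(s))=o(1/s)$.
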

Note that in contrast to two dimensions, it is preferable to describe
horizons by their perpendicular rather than parallel directions, hence
the projection operator appearing in this formula.

Combining both theorems and taking the trace of the diffusion matrix,
we find the simple relation:
\begin{equation}\label{e:DF}
d{\cal D}=\lim_{t\to\infty}tF(t)
\end{equation}
independent of the details of the principal horizons.

In the case that the principal horizons do not span the full space,
the superdiffusion matrix is degenerate, and we expect that as in
two dimensions, there will be normal diffusion in the direction(s)
orthogonal to all these horizons if such motion is not completely blocked.
Sec.~\ref{s:diff} contains a numerical test of Conj.~\ref{c:corr} and the
proof of Thm.~\ref{th:D}, with application to the discrete time diffusion
coefficient given in Sec.~\ref{s:disc}.

For both the free flight function and the diffusion coefficient, the limit
of small $r\to 0$ spherical scatterers leads to an increasing number
of horizons.  Taking a Mellin transform, both results can be formally related
to the Riemann hypothesis (RH), the celebrated conjecture that the complex
zeros of the Riemann zeta function $\zeta(s)$ lie on the line with real part
$1/2$~\cite{Co}.  The formal argument assumes that the contour integral can be
closed avoiding the poles of $\zeta(s)^{-1}$ in the integrand in a
convergent sequence of increasing semicircles.  A complete proof of an equivalence with RH is not yet available even for the simpler related case
of the open circular billiard of Ref.~\cite{BD}; work on the latter is
ongoing.  Connections between lattice sums and RH is of course not new; see
for example~\cite{Ba}. Here we make a connection between RH and transport in
a definite physically motivated model; other physical systems with connections
to RH are discussed in Ref.~\cite{SH}.

Further development of this subject could include verification and/or
modification of the above conjectures on the theoretical side; the incipient
case is particularly challenging due to the similarity with the example in
Ref.~\cite{BT}, hence almost certainly exponential growth of complexity.
The case of multiple rationally located scatterers per unit cell naturally
leads to Dirichlet twisted Epstein zeta functions, which may be interesting
from a number theoretical point of view (see the discussion at the end of
Sec.~\ref{s:princ}).  There are generalisations of the Lorentz gas which are
almost periodic~\cite{DSV,NSV} or have external fields~\cite{CD,CD2,D}.

On the applications side, periodic boundary conditions are
commonly used in molecular dynamics simulations, so a natural extension would
be to investigate the (possibly unphysical) effects of these boundary conditions
in simulations of low density systems of many particles.  Transport in periodic
structures with long free flights occur naturally in crystalline materials, nanotubes
and periodically fabricated devices such as antidot arrays and various classes
of metamaterials.  For these, it should be noted that the soft potentials may
modify the dynamics of particles moving close to a horizon direction.  As with
periodic time perturbations, the effect could be either more or less stable
than free motion in the Lorentz gas depending on the details of the potential
and the speed of the particle, however, as here, enumeration of horizons
remains important for understanding transport properties.

The remaining sections are structured as follows: Sec.~\ref{s:def} motivates
and gives the necessary definitions related to the horizons,
Sec.~\ref{s:lem} develops the proof of Thm.~\ref{th:F},
proving two lemmas and discussing Conj.~\ref{c:hor}.  Sec.~\ref{s:princ}
then applies this theorem to principal horizons, giving $F(t)\sim C/t$ with the 
constant determined exactly.  Numerical methods are then discussed, and
used to confirm these results. In
the limit of small scatterers, connection is made with the Riemann
Hypothesis.  Sec.~\ref{s:inc} then considers incipient horizons, where heuristic
arguments and numerical simulations suggest Conj.~\ref{c:inc}.  Sec.~\ref{s:diff} then turns to the diffusion problem, proving Thm.~\ref{th:D}
conditional on Conjs.~\ref{c:hor} and~\ref{c:corr},
again for principal horizons.  In this case the numerical results
are inconclusive due to the logarithm being dominated by a constant
at accessible numerical timescales.
Finally, Sec.~\ref{s:disc} treats the diffusion problem in discrete time.

\section{Definitions}\label{s:def}
The phase space of the full periodic Lorentz gas is $\tilde M=\tilde M_x\times M_v$ where
$\tilde M_x\subset\mathbb{R}^d$ denotes the configuration space external to the scatterers and $M_v=\mathbb{S}^{d-1}$ is the set of velocities of unit magnitude.  The same quantities without tildes are associated with the compact phase space of the dynamics considered modulo the lattice, so $M=M_x\times M_v$ with $M_x=\mathbb{R}^d/{\cal L}\backslash\Omega$.  There are natural projections
$\pi_x:M\to M_x$ and $\pi_v:M\to M_v.$  Here,
$\cal L$ is the lattice of periodicity of the Lorentz gas, ie a discrete subgroup of $\mathbb{R}^d$ with finite covolume $\cal V$,
and in the main example given by $\mathbb{Z}^d$.  Also, $\Omega\subset\mathbb{R}^d/{\cal L}$
denotes the union of the scatterers.
The scatterers (connected components of $\Omega$) are assumed to be finite in number.  The curvature
of their boundaries has a positive lower bound, and the boundaries themselves are $C^3$ smooth.
To allow for touching or overlapping spheres (for example), the scatterer boundaries may be non-smooth
on a finite number of submanifolds of dimension at most $d-2$; we require however that each scatterer
be the union of a finite number of convex sets in this case. 

The uniform probability measure on $M$ is $\mu=m/[{\cal V}S_{d-1}(1-{\cal P})]$ where $m$ is the
usual Lebesgue measure for $\mathbb{R}^d\times\mathbb{S}^{d-1}$, $S_{d-1}$ is the Lebesgue measure of the sphere $\mathbb{S}^{d-1}$ (see Tab.~\ref{t:Sd}) and $\cal P$ is the packing fraction of the obstacles,
equal to $m_x(\Omega)/{\cal V}$ where $m_x$ is Lebesgue measure on $\mathbb{R}^d$.

The usual billiard flow is denoted $\Phi^t:M\to M$.  Thus we have the free flight formula
$\Phi^t({\bf x},{\bf v})=({\bf x}+{\bf v}t\;\;{\rm mod}\;{\cal L},{\bf v})$
in the absence of a collision.  The time to the next collision is denoted
$T({\bf x,v})=\inf\{t>0:\pi_x\circ\Phi^t({\bf x},{\bf v})\in \partial\Omega\}$.
If there is no future collision, we allow $T({\bf x,v})$ to be infinite.
The set surviving for time $t$ without a collision is
$M_t=\{({\bf x,v})\in M:T({\bf x,v})\geq t\}$, in terms of which $F(t)=\mu(M_t).$
One of our main aims is to characterise the large $t$ asymptotics of this function.

The
flow $\Phi^t$ is discontinuous at a collision, using the usual mirror
reflection law, but we do not need the explicit formula here.  Nor do we need
to specify conventions for the flow at the exact moment of collision or for
orbits that are tangent to the boundary or reach a non-smooth point on it; these
sets are of zero measure.

In three or more dimensions, collections of orbits with large or infinite
$T$ are of different types.  We now formalise the terms introduced in the
previous section.  We want to characterise subsets of $M_\infty$, ie
collections of points such that $T=\infty$.  

The free dynamics is uniform motion on a torus.
Thus a trajectory starting from a point in $M$
must either be dense (hence have finite $T$)
or move parallel to a linear subspace spanned by vectors in
${\cal L}$.  Thus for each ${\bf x}\in\pi_xM_\infty$ there are one or
more (possibly infinitely many) linear subspaces of directions in
which there are no collisions. Let us choose one, $V_H$ which is not
contained in another such subspace.  Then, we can define
\begin{eqnarray}
H_v&=&V_H\cap \mathbb{S}^{d-1}\label{e:Hv}\\
\hat{H}_x&=&\{{\bf x}\in M_x:({\bf x},H_v)\in M_\infty\}
\end{eqnarray}
The equation defining free motion ensures that
$\hat{H}_x$ is invariant under translations in $V_H$.  Thus we
can decompose it as
\begin{equation}\label{e:Hxperp}
\hat{H}_x=\hat{H}_x^\perp\oplus V_H/{\cal L}_H
\end{equation}  
where ${\cal L}_H={\cal L}\cap V_H$ is a sublattice of the original
lattice $\cal L.$  Initial conditions contained in $\hat{H}_x$ and having
velocities close to vectors in $H_v$ will survive for a long time
before colliding, as long as they remain in a connected component
of $\hat{H}_x$.  This motivates the following definition of a horizon:
\begin{defn}
A Horizon is a set $H\subset M_\infty$ which satisfies
\begin{description}
\item[(a)] $H$ is a direct product of sets $H_x\subset M_x$ and
$H_v\subset M_v$,
\item[(b)] $H$ is inextensible, ie not a proper subset of another
horizon,
\item[(c)] $H_x$ is connected.
\end{description}
\end{defn}
The above discussion implies that all horizons are of the
form given in Eqs.~(\ref{e:Hv}--\ref{e:Hxperp}) above, with the
additional condition that $H_x$ be a connected component of
$\hat{H}_x$, that is,
\begin{eqnarray}
H&=&H_x\times H_v\\
H_x&=&H_x^\perp\oplus V_H/{\cal L}_H\\
H_v&=&V_H\cap\mathbb{S}^{d-1}
\end{eqnarray}
The dimension
of the horizon $d_H\in\{1\ldots d-1\}$
is defined to be the dimension of $V_H$, which
is also the dimension of ${\cal L}_H$.
When $d_H=1$, $H_v$ consists of two opposite points, so $H$ itself
is not connected, even though $H_x$ is connected.  When $d_H>1$,
$H$ is also connected.

An example where $\hat{H}_x$ may have
more than one connected component is that of a square
lattice, ie ${\cal L}=\mathbb{Z}^2$, with 
a circular scatterer of radius $r_1=0.2$ at the origin and
an additional scatterer of radius $r_2=0.1$ at $(1/2,1/2)$.
Particles moving horizontally will not hit any scatterer if their
$y$ coordinate lies in either of the intervals $(0.2,0.4)$, 
$(0.6,0.8).$  Thus there are two horizons with the same $V_H$
in this case.

We denote the $d-d_H$ dimensional covolume of ${\cal L}_H$ in $V_H$
by ${\cal V}_H$.
The space of dimension $d-d_H$ perpendicular to
$V_H$ is denoted $V_H^\perp$.  The original lattice induces a
lattice ${\cal L}_H^\perp\subset V_H^\perp$, given by elements
of ${\cal L}/{\cal L}_H$ projected onto $V_H^\perp$ along $V_H$.
This has covolume ${\cal V}_H^\perp={\cal V}/{\cal V}_H$ since a
unit cell of the original lattice
can be formed by the product of two unit cells of lower
dimension in orthogonal subspaces.
In the original space $\tilde{M}$
each horizon $H$ corresponds to a periodic collection
of connected components;
lattice translations in ${\cal L}_H$ preserve each component, while lattice
translations in $\cal L\backslash{\cal L}_H$ map one component to another.

When $d=2$ we can have only horizons with dimension $d_H=1$,
corresponding to the channels in previous literature
(see Sec.~\ref{s:DXi}).
When $d=3$ we can have a ``planar horizon'', $d_H=2$, in
which $H_v$ is a great circle in $\mathbb{S}^2$, a
``cylindrical horizon'', $d_H=1$, in which $H_v$ consist of two
symmetrically placed points; this terminology comes from~\cite{S}.
More than one type of horizon may exist in the same billiard. For
example if the scatterers do not cross the boundaries of the
unit cell in three dimensions, there are planar horizons along
the boundaries; there may also be a horizon of cylindrical type in the interior.
Finite horizon, in which no horizons exist, are also possible in
any dimension $d$, and becomes increasingly probable for random
arrangements of scatterers of fixed density in a unit cell of
increasing size.  Nevertheless no explicit example of a finite horizon
periodic Lorentz gas in $d>2$ has yet been shown.

Recall the discussion of the free flight function, $F(t)$
above.  We can similarly define the probability of remaining for
at least a given time in the spatial projection of a horizon,
$F_H(t)$; this definition has already been given in Eq.~(\ref{e:F_H}).
It is useful to make some definitions related to the dimension:

\begin{defn}
Maximal horizon: A horizon of dimension greater than or equal to
all other horizons of the given billiard.
\end{defn}

\begin{defn}
Principal horizon: A horizon $H$ of the highest possible dimension,
that is, $d_H=d-1$.
\end{defn}

Clearly a principal horizon is also maximal.  Anomalous diffusion
properties arising from a $1/t$ decay of velocity autocorrelation
functions are expected only for principal horizons;
see Ref.~\cite{M,S} and below.

We now make an important observation:
For dimensions $d>2$ we have a qualitatively
new type of horizon, corresponding to the border
between horizons of one dimension and
the next.

\begin{defn}
Incipient horizon: A horizon for which $H_x^\perp$ has zero $d-d_H$
dimensional Lebesgue measure.  An incipient maximal or incipient
principal horizon will denote a
horizon that is incipient and also maximal or principal, respectively.
\end{defn}

When $d=2$, an incipient horizon is an orbit tangent to scatterers on
both sides, hence isolated and having no effect on the dynamics;
this is a rather trivial form of horizon.
In dimensions $d>2$, a incipient horizon occurs when
$H_x^\perp$ for a horizon of dimension $d_H>1$ shrinks to a zero
measure set, so that $F_H(t)=0$.  This is easiest to visualise in the
three dimensional cubic Lorentz gas,
when the planar horizons shrink to zero width touching scatterers on
both sides as the radius approaches $1/2$ from below; see Fig.~\ref{f:hor}.

Within an incipient horizon $H$, there is a lattice of points
${\cal L}_{inc}$ in $H_x$
where the scatterers touch the horizon. The set of velocities within
$H_v$ that do not come arbitrarily close to the scatterers consists of
those contained in horizons for a Lorentz gas of dimension $d_H$ with
infinitesimal scatterers placed at points of ${\cal L}_{inc}$.  Each of
these velocities corresponds to a horizon of dimension $\leq d_H-1$ in $H$
and also in the original space $M$.  However, unlike in the Lorentz
gas with finite sized scatterers, the number of horizons of the highest
dimension $d_H-1$ is now countably infinite, parallel to all possible
$d_H-1$ dimensional subspaces spanned by vectors in ${\cal L}_{inc}$.
These become finite in number as soon as the radius of the scatterers is
increased and the incipient horizon destroyed.
Thus an incipient horizon of dimension $d_H>1$ is in general
associated with an infinite number of horizons of dimension $d_H-1$.
This leads to the interesting effects discussed in Conj.~\ref{c:inc}
above and Sec.~\ref{s:inc} below.

We also remark that incipient horizons contain orbits tangent to
infinitely many scatterers from more than one direction, like
the very recent example of exponential growth of complexity
found in finite horizon multidimensional billiards~\cite{BT}.
Here too, the singularity structure in the vicinity of such orbits
is extremely complicated.  One final definition:

\begin{defn}
$\cal H$ is the set of non-incipient maximal horizons.
\end{defn}

\section{Two lemmas}\label{s:lem}
We now consider the first two propositions in section 2 of Bleher~\cite{B} in the more general
case of arbitrary dimension, which are necessary for the proof of Thm.~\ref{th:F}.  The first
proposition can be naturally generalised, and is given as the first lemma here.  The second
proposition is not valid in $d>2$, and must be replaced by Conj.~\ref{c:hor} above.  The contribution
from each horizon, also needed for Thm.~\ref{th:F}, forms the content of the second lemma.

We generalise Bleher's~\cite{B} Prop. 2.1:
\begin{lem}\label{l:fin}
For any periodic Lorentz gas with infinite horizon there are finitely many maximal (possibly
incipient) horizons.
\end{lem}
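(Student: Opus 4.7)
The plan is to separately bound (i) the number of possible velocity subspaces $V_H$ that can carry a maximal horizon and (ii) for each such $V_H$, the number of connected components of $\hat H_x$ giving rise to distinct horizons. All maximal horizons share a common dimension $d_H$, which I fix throughout.

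For step (i), my first observation is that because the unit cell of ${\cal L}$ contains only finitely many scatterers, each convex with $C^3$ boundary of strictly positive curvature (or a finite union of such), every scatterer contains a Euclidean ball of some uniform inradius $\epsilon>0$. Orthogonally projecting such a ball onto $V_H^\perp$ produces a $(d-d_H)$-dimensional ball of radius $\epsilon$ whose Lebesgue measure $V_{d-d_H}\,\epsilon^{d-d_H}$ is a lower bound that is independent of the orientation of $V_H$. The existence of a horizon---whether incipient or not---forces the ${\cal L}_H$-periodic projection of $\Omega$ to the torus $V_H^\perp/{\cal L}_H^\perp$ to leave at least one point uncovered; since the union of scatterer shadows in the fundamental domain contains at least one full shadow, the covolume satisfies ${\cal V}_H^\perp \geq V_{d-d_H}\,\epsilon^{d-d_H}$, equivalently ${\cal V}_H \leq {\cal V}/(V_{d-d_H}\,\epsilon^{d-d_H})$.

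I would then apply Minkowski's theorem on successive minima to ${\cal L}_H \subset V_H$. Since the shortest nonzero vector of ${\cal L}$ has a positive lower bound and ${\cal V}_H$ is bounded above, all $d_H$ successive minima of ${\cal L}_H$ are controlled by a constant depending only on ${\cal L}$, $\epsilon$, and $d$. In particular, ${\cal L}_H$ admits a basis consisting of vectors in the discrete set ${\cal L}$ of uniformly bounded Euclidean norm, and only finitely many such $d_H$-tuples exist; taking real spans leaves only finitely many candidate subspaces $V_H$. For step (ii), given a fixed $V_H$, the horizons sharing this velocity subspace correspond to connected components of $\hat H_x$, equivalently of $H_x^\perp \subset V_H^\perp/{\cal L}_H^\perp$, which is the complement of $\pi(\Omega)$ in the fundamental domain (or its tangency locus, in the incipient case). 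Since $\Omega$ is a finite union of convex sets with piecewise $C^3$ boundary, $\pi(\Omega)$ is bounded by finitely many smooth pieces, so its complement has finitely many connected components. Combining the two steps yields finitely many maximal horizons.

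The main obstacle is the incipient case. There $H_x^\perp$ has zero $(d-d_H)$-dimensional measure, so in step (i) the shadow-covering bound is saturated (the inscribed-ball estimate still applies, but only just), and in step (ii) the connected components can degenerate to isolated tangency points or lower-dimensional strata, so finiteness must be extracted from the bounded geometric complexity of the singular boundary of $\pi(\Omega)$ rather than from an open-set argument. Handling this cleanly---perhaps by stratifying the tangency locus by the number and type of scatterers touched simultaneously---is the step I would expect to require the most care.
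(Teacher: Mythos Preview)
Your approach is more quantitative than the paper's and structurally different, but step~(i) contains a genuine gap: the inequality ${\cal V}_H^\perp \geq V_{d-d_H}\,\epsilon^{d-d_H}$ does not follow from the argument you give. The $\epsilon$-ball shadow, once pushed down to the torus $V_H^\perp/{\cal L}_H^\perp$, can wrap around and have image of measure strictly smaller than $V_{d-d_H}\,\epsilon^{d-d_H}$; the existence of an uncovered point then yields no lower bound on the covolume. Concretely, take ${\cal L}=\mathbb{Z}^3$ with a single spherical scatterer of radius $\epsilon<1/2$ per cell and consider the one-dimensional direction $V_H=\mathrm{span}\,(1,N,0)$. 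Every point with $z\in(\epsilon,1-\epsilon)$ lies in $H_x$, so a (non-maximal) horizon exists for each $N$, yet ${\cal V}_H^\perp=1/\sqrt{1+N^2}\to 0$, violating your inequality for all large $N$. Because nothing in your step~(i) invokes maximality, the argument cannot separate the maximal case from this one and therefore cannot establish the bound you need. This is not the incipient issue you flag at the end; it is a failure of the covering heuristic itself.

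The paper's proof uses maximality in an essential and quite different way. It argues by contradiction: infinitely many $V_H$ force unbounded covolumes ${\cal V}_H$ (your Minkowski observation is exactly the justification for this implication), and then the corresponding horizons fill some subspace of dimension strictly greater than $d_H$ densely, so the interiors of the scatterers cannot meet that subspace; hence it is itself a (possibly incipient) horizon of higher dimension, contradicting maximality of the original family. Your Minkowski step slots neatly into that argument, but the dimension-raising contradiction is the ingredient you are missing, and it cannot be replaced by the shadow-volume bound.
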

\begin{proof}
Assume there are an infinite number of maximal horizons, each with its velocity subspace $V_H$.
The set of $V_H$ is either finite or infinite.  If it is finite, at least one $V_H$ corresponds
to infinitely many $H$, so that $\hat{H}_x$ has infinitely many connected components.  But this
is not possible since the scatterers are comprised of only a finite number of convex pieces.
Thus we conclude there are also an infinite number of $V_H$, leading to unbounded $d_H$-dimensional
covolumes ${\cal V}_H$.  The horizons thus fill at least one space of dimension greater than
$d_H$ more and more densely, which is only possible if the intersection of the interior of the
scatterers with this space is empty.  Thus the space forms its own (possibly incipient) horizon
with a dimension greater than $d_H$, violating the assumption that the original horizons are maximal.  
\end{proof} 

This lemma ensures that the sum over maximal horizons appearing in Thm.~\ref{th:F} is finite.

Bleher's~\cite{B} Prop. 2.2 asserts that in any sufficiently long free path (greater than a
fixed constant $R_0$), all but extreme parts of bounded (less than another constant $R_1$) total
length lie in the space projection $H_x$ of a horizon $H$, or in the terminology of that paper,
a corridor.  For $d>2$ this is not true, since there is in general a complicated set of
horizons of differing dimension connected in a nontrivial manner.

All we can hope for is conjecture~\ref{c:hor} given in the introduction, which we now discuss in
more detail.  The sum over $H\in{\cal H}$ ignores lower dimensional horizons,
some of which are connected to the maximal horizons.  It also 
double counts overlaps of the maximal horizons.  For example,
in the three dimensional Lorentz gas, with a cubic
arrangement of spherical scatterers of radius $r<1/2$ (considered
in more detail later), there are planar horizons parallel to the
$(x,y)$, $(x,z)$ and $(y,z)$ planes, for example the set 
$\{(x,y,z)\in\mathbb{R}^3:r<z<1-r\}$.  Any two of these intersect
in cylindrical sets parallel to one of
the coordinate axes.  There are also lower dimensional (ie cylindrical) 
horizons that are not part
of the planar horizons, for example the set 
$\{(x,y,z)\in\mathbb{R}^3:0<x,y<r, x^2+y^2>r^2 \}$.
Conjecture~\ref{c:hor} asserts that these lower dimensional effects do
not affect the leading order free flight function $F(t)$ if there is a
non-incipient maximal horizon.

The exclusion of purely incipient maximal horizons is necessary, as in
this case $F_H(t)=0$. For $d_H>1$ this is associated with an infinite
number of horizons of lower dimension, whose contributions (including intersections)
would need to be carefully enumerated to calculate $F(t)$; we give some
heuristic arguments and numerical simulations for the case of incipient
principal horizons in Sec.~\ref{s:inc}.

We now come to the most important result of this section:
\begin{lem}\label{l:horsum}
For a non-incipient horizon $H$, we have as $t\to\infty$
\begin{equation}\label{e:gen}
F_H(t)\sim\frac{S_{d_H-1}
\int_{H_x^\perp}\int_{H_x^\perp}
\Delta^{\rm vis}({\bf x},{\bf y})d{\bf x}d{\bf y}}
{S_{d-1}{\cal V}_H^\perp(1-{\cal P})t^{d-d_H}}
\end{equation}
\end{lem}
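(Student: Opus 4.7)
The approach is to evaluate $F_H(t)$ directly from the definition of $\mu$ and extract the leading $t^{-(d-d_H)}$ asymptotic by exploiting the product structure $H_x = H_x^\perp \oplus V_H/\mathcal{L}_H$ together with the orthogonal velocity split $\mathbf{v} = \mathbf{v}^\parallel + \mathbf{v}^\perp$ with $\mathbf{v}^\parallel \in V_H$, $\mathbf{v}^\perp \in V_H^\perp$.

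First I would unpack
\begin{equation*}
F_H(t) = \frac{1}{\mathcal{V}\, S_{d-1}(1-\mathcal{P})} \int_{H_x} d\mathbf{x} \int_{\mathbb{S}^{d-1}} \mathbf{1}\bigl[\mathbf{x}^\perp + s\,\mathbf{v}^\perp \in H_x^\perp\ \forall\,s\in[0,t]\bigr]\,dS(\mathbf{v}),
\end{equation*}
noting that the $V_H$-component of the trajectory is unconstrained by the horizon condition, so integration over $\mathbf{x}^\parallel \in V_H/\mathcal{L}_H$ yields a factor $\mathcal{V}_H$ and the remaining spatial integral runs over $H_x^\perp\subset V_H^\perp/\mathcal{L}_H^\perp$.

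Second, since $H_x^\perp$ is bounded (being a connected subset of the compact torus $V_H^\perp/\mathcal{L}_H^\perp$), the requirement that the straight segment of length $|\mathbf{v}^\perp|\,t$ remain in some lift of $H_x^\perp$ forces $|\mathbf{v}^\perp|=O(1/t)$. Rescaling $\mathbf{u}=t\,\mathbf{v}^\perp$ produces the Jacobian factor $t^{-(d-d_H)}$. Near the great subsphere $V_H\cap\mathbb{S}^{d-1}$, parametrising $\mathbf{v}^\parallel = \sqrt{1-|\mathbf{v}^\perp|^2}\,\hat\omega$ with $\hat\omega\in\mathbb{S}^{d_H-1}$ and computing the induced Gram determinant gives
\begin{equation*}
dS(\mathbf{v}) = \bigl(1-|\mathbf{v}^\perp|^2\bigr)^{(d_H-2)/2}\,d\omega_{d_H-1}(\hat\omega)\,d\mathbf{v}^\perp,
\end{equation*}
so at leading order $dS \to d\omega_{d_H-1}\,d\mathbf{v}^\perp$, the $\hat\omega$-integral contributes $S_{d_H-1}$, and the curvature correction is $1+O(t^{-2})$.

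Third, to pass from the torus to the universal cover, I would rewrite the rescaled $\mathbf{u}$-integral over $V_H^\perp$ and substitute $\mathbf{y}^\perp = \mathbf{x}^\perp + \mathbf{u}$. Gathering contributions from each lift of $H_x^\perp$ and projecting back to the torus, the number of lifts for which the entire segment $[\mathbf{x}^\perp,\mathbf{y}^\perp]$ lies inside a single copy of $H_x^\perp$ is exactly $\Delta^{\rm vis}(\mathbf{x}^\perp,\mathbf{y}^\perp)$. Collecting constants and using $\mathcal{V}_H/\mathcal{V} = 1/\mathcal{V}_H^\perp$ yields the claimed formula.

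The main technical obstacle is justifying that the restriction to $|\mathbf{v}^\perp|=O(1/t)$ and the flat approximation to the sphere measure contribute only subleading corrections. Non-incipience of $H$ guarantees that $\int_{H_x^\perp}\!\int_{H_x^\perp}\Delta^{\rm vis}\,d\mathbf{x}\,d\mathbf{y}$ is strictly positive, so the leading term does not vanish; dominated convergence applied to the rescaled integrand over compact subsets of $H_x^\perp\times V_H^\perp$ handles the bulk, while the boundary layer of $(\mathbf{x}^\perp,\mathbf{u})$ for which the segment exits $H_x^\perp$ only within distance $O(1/t)$ of $\partial H_x^\perp$ has rescaled measure $o(1)$ as $t\to\infty$, contributing $o(t^{-(d-d_H)})$ and hence not disturbing the asymptotic equivalence.
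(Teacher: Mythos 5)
Your proposal is correct and follows essentially the same route as the paper's proof: splitting ${\bf v}$ and ${\bf x}$ into $V_H$ and $V_H^\perp$ components, integrating out ${\bf x}^\parallel$ to get ${\cal V}_H$, noting that survival forces ${\bf v}^\perp$ of order $t^{-1}$ so the rescaled perpendicular integral produces $t^{-(d-d_H)}$ times the visibility double integral, with the parallel velocity measure contributing $S_{d_H-1}$ and ${\cal V}={\cal V}_H{\cal V}_H^\perp$ giving the final constant. You are in fact somewhat more explicit than the paper about the spherical-measure Jacobian $(1-|{\bf v}^\perp|^2)^{(d_H-2)/2}$ and the boundary-layer error control, which only strengthens the argument.
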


\begin{proof}
We split $\bf v$ into components in $V_H^\perp$ and
$V_H$ subspaces: ${\bf v}={\bf v}^\perp+{\bf v}^\parallel$, and
similarly for $\bf x$.
The trajectories not reaching the boundary of the horizon before time
$t$ are given by ${\bf v}^\perp\in (\tilde H_x^{\rm vis}({\bf x})-{\bf x})/t$
to leading order in $t$.  Here, $\tilde H_x^{\rm vis}({\bf x})$ is the set
of points ``visible'' from $\bf x$ in $H_x^\perp$, ie the set connected
by a straight line that does not leave $H_x^\perp$. The tilde indicates
that this set is defined on the full $V_H^\perp$, not modulo the lattice,
ie more than one lattice translate of a point may be visible to a given
${\bf x}\in H_x^\perp$.

The definition of $F_H(t)$ is given in Eq.~(\ref{e:F_H}).
Now $\mu$ is just given by $m/[{\cal V}S_{d-1}(1-{\cal P})]$ where $\cal P$ is
the packing fraction of scatterers and $m$ is Lebesgue measure.  We 
integrate over all ${\bf x}^\parallel$ which is the covolume of ${\cal L}_H$ in
$V_H$, ${\cal V}_H$.  The measure over
${\bf x}^\perp$ and ${\bf v}^\perp$, multiplied by $t^{d-d_H}$, is
given by the integral
\[ \int_{H_x^\perp}|\tilde H_x^{\rm vis}({\bf x})|d{\bf x} \]
which can be written more symmetrically as
\[ \int_{H_x^\perp}\int_{H_x^\perp}\Delta^{\rm vis}
({\bf x},{\bf y})d{\bf x}d{\bf y} \]
Since ${\bf v}^\perp$ is of order $t^{-1}$, ${\bf v}^\parallel$
remains of unit magnitude to leading order, and its measure is simply 
$S_{d_H-1}$.  Taking account of the normalisation factors and the relation
${\cal V}={\cal V}_H{\cal V}_H^\perp$ we get the result
stated in the lemma.
\end{proof}

\section{Principal horizons and the free flight function}\label{s:princ}

In the case of a principal horizon, we note that $H_x^\perp$ is
one dimensional, so that the double integral in Lemma 2 is simply 
$|H_x^\perp|^2$.  Otherwise $H_x^\perp$ is typically non-convex and the integral
is not easy to evaluate (an example is provided in the next section).    

Possible principal horizons can be enumerated using the dual lattice
${\cal L}^*=\{{\bf u}\in\mathbb{R}^d:{\bf u}\cdot{\bf v}\in\mathbb{Z}^d,\quad\forall{\bf v}\in{\cal L}\}$
where ${\bf u}\cdot{\bf v}$ denotes the usual inner product in $\mathbb{R}^d$.  In particular,
for each lattice ${\cal L}_H$ of dimension $d-1$, either of the two vectors perpendicular to
${\cal V}_H$ and of length $({\cal V}_H^\perp)^{-1}$ lie in ${\cal L}^*$.  It is not hard to see that
there is a 1:1 correspondence between codimension-1 lattices and primitive elements of ${\cal L}^*$
modulo inversion, where primitive indicates that it is not a multiple of an element of ${\cal L}^*$
by an integer other than $\pm 1$, and inversion is multiplication by $-1$.  Note that the zero
vector is not primitive.  We denote such a vector by ${\bf l}={\bf n}_H/{\cal V}_H^\perp$ with
magnitude $L=|{\bf l}|=({\cal V}_H^\perp)^{-1}$ so that ${\bf n}_H$ is a unit vector perpendicular to
$V_H$.  The subscript $H$ is suppressed for the vectors ${\bf l}$ and their magnitudes $L$.
For each principal horizon there is an ambiguity in the sign of ${\bf l}$ and ${\bf n}_H$
that does not affect any of the results.  When summing over all primitive lattice vectors, the set
denoted $\hat{\cal L}^*$, we need to divide by 2 to take account of this inversion symmetry.

Given a vector ${\bf l}\in\hat{\cal L}^*$, a principal horizon will exist if the perpendicular
projection of the scatterers onto the line parallel to $\bf l$ does not cover the whole of that line.
If there is one spherical
scatterer of radius $r$ per unit cell, this corresponds to the condition that the
width of the horizon $w_H={\cal V}_H^\perp-2r=L^{-1}-2r$ is non-negative.  
Thus for a given $r$, there are a finite number of principal horizons corresponding to vectors
in $\hat{\cal L}^*$ modulo inversion with lengths less than $(2r)^{-1}$; this number grows without
bound as $r$ approaches zero.  For a finite number of arbitrary convex scatterers, this argument
gives an upper bound if $2r$ is interpreted as the smallest width of any of the scatterers.

Returning to the case of a single spherical scatterer of radius
$r$ per unit cell, we have ${\cal P}=V_dr^d$ (see Tab.~\ref{t:Sd}).
Using Thm.~\ref{th:F}, we find that
\begin{equation}\label{e:sum}
F(t)\sim\frac{G_d}{t(1-V_dr^d)}\sum_{{\bf l}\in\hat{\cal L}^*}
Lu_2(L^{-1}-2r)
\end{equation}
where $u_2(x)=x^2u(x)$, $u(x)$ the unit step function, equal to $1$
for positive argument and zero otherwise, and $L=|{\bf l}|$ as above. The geometrical factor 
$G_d=S_{d-2}/(2S_{d-1})$ is given in Tab.~1, where the 2 comes from the inversion symmetry.

Numerical simulations were performed using the author's C++ code, which is 
designed to simulate arbitrary piecewise quadric billiards in arbitrary 
dimensions.  The flight time is thus given the solution of a quadratic
equation; simplifications for the cases where the matrix defining the
quadratic form is zero or the unit matrix are used for greater efficiency.
The main numerical difficulty with billiards is to ensure that the
particle remains on the correct side of the boundary, while retaining
as much accuracy as possible for almost tangential collisions.  These
can be addressed by disallowing very rapid multiple collisions within
a very small perpendicular distance of the boundary.  Tests were made
comparing 16 and 19 digit precision calculations, and by an explicit
investigation of almost tangential collisions.

For the calculations presented here, the spherical scatterer is contained
in a cubic cell.  On reaching the boundary, the particle is transferred to
the opposite side, updating the local position while keeping track of the
overall displacement.  The main obstacle to simulations in dimensions
above $d=10$ considered here is that the volume fraction of balls decreases 
rapidly with dimension for the cubic lattice.  Other lattices (for example
the close packed $E_8$ lattice) may be considered in the future, however
a higher volume fraction comes at the cost of a more complicated unit cell.
It is difficult to get an intuitive grasp of the structure of horizons
of non-cubic lattices above three dimensions; one approach would be
to numerically infer them from $F(t)$ via Thm.~\ref{th:F}.

Numerical simulations of $F(t)$ were performed for the case of a cubic
lattice with $r=0.4$ where there is only a single contribution in the above
sum, for $2\leq d\leq 10$ in Fig.~\ref{f:c4}.  Agreement is good
for low dimensions, while for higher dimensions it is plausible but
limited by finite statistics together with longer times required for
convergence to the final $C/t$ form.  In the case of $r=0.6$ the scatterers
are overlapping, and the maximal horizons have dimension $d-2$.  Here we
expect $C/t^2$ behaviour for $d\geq 3$ but do not have an explicit
analytic formula; see Fig.~\ref{f:c6} which shows good qualitative
agreement for small dimensions, and slow convergence for higher dimensions.
The intermediate case of $r=0.5$ (incipient principal horizon) is considered
in section~\ref{s:inc} below.

\begin{figure}
\includegraphics[width=450pt, height=250pt]{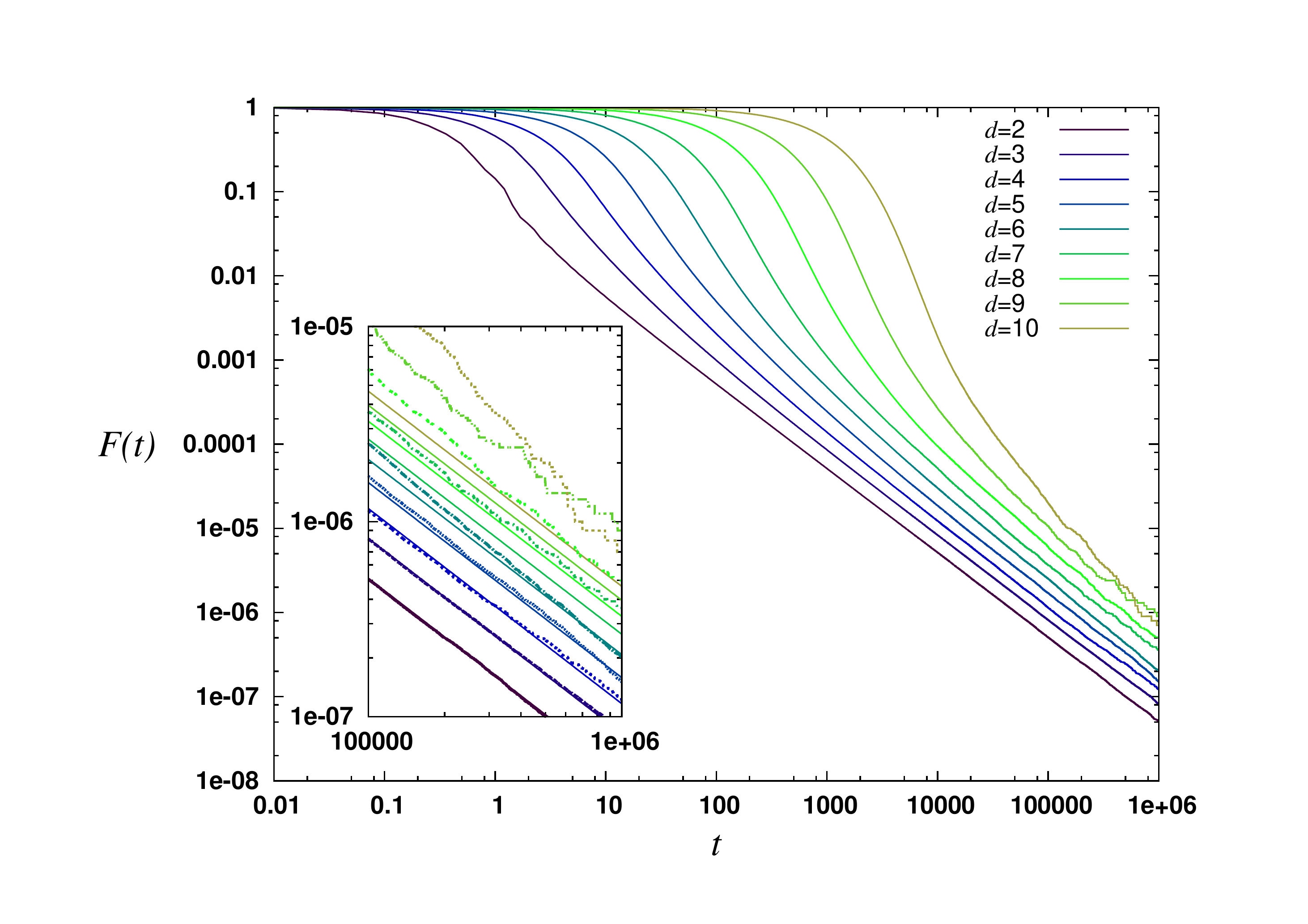}
\caption{\label{f:c4} The free flight function for dimensions $2\leq d\leq 10$ for $r=0.4$ showing $C/t$ behaviour.  The inset shows comparison with the predictions of Eq.~(\protect\ref{e:sum}); here
the numerical results are dotted and the predictions the solid straight lines.}
\end{figure}

\begin{figure}
\includegraphics[width=450pt, height=250pt]{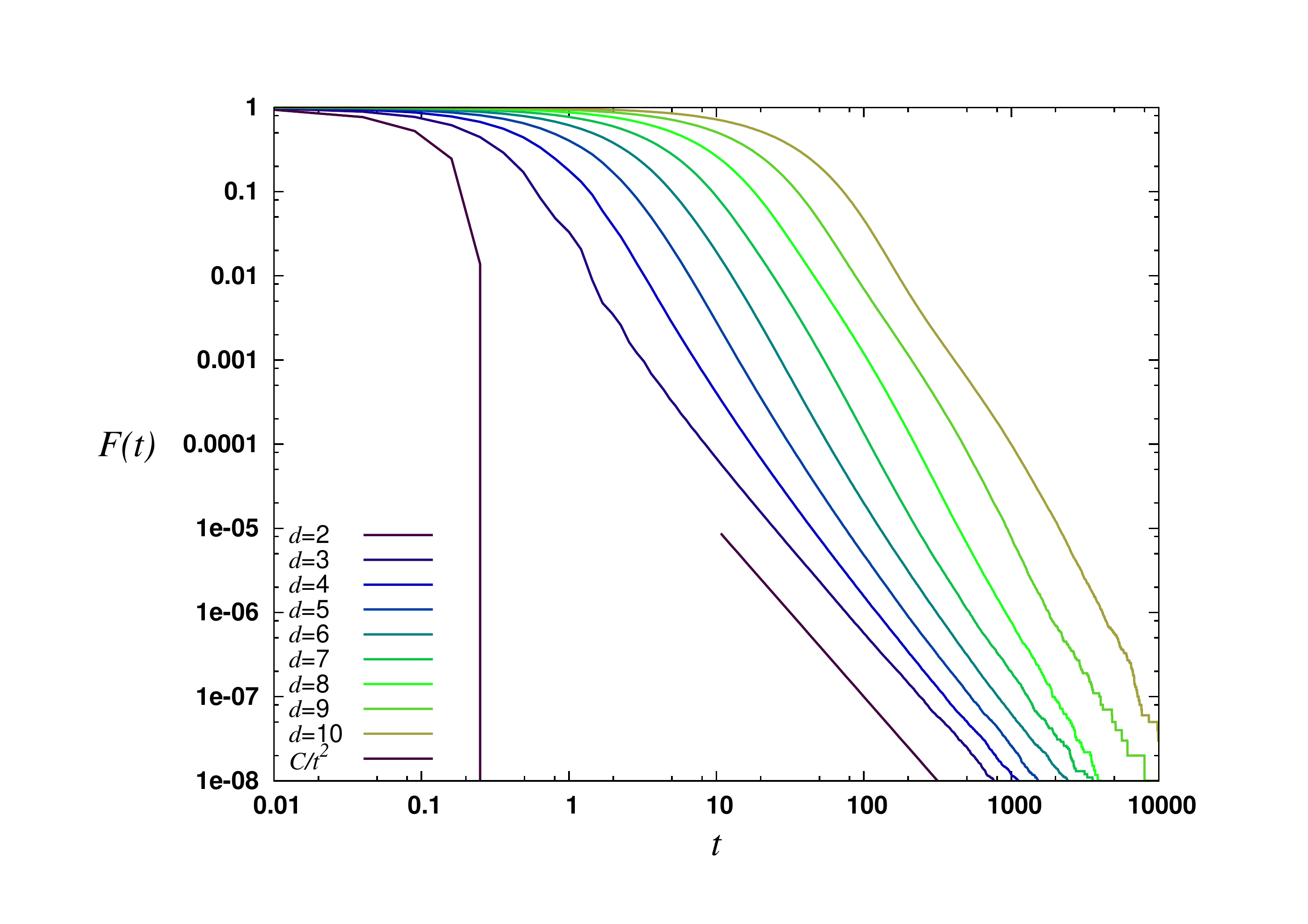}
\caption{\label{f:c6} The free flight function for dimensions $2\leq d\leq 10$ for
$r=0.6$ (overlapping) 
scatterers, showing a collision at bounded time for $d=2$ and approach to $C/t^2$ for $d>2$; the rate of
convergence with $t$  is slow for the higher dimensions considered.}
\end{figure}

We now seek the small $r$ asymptotics of the above expression.  First we 
remove the primitivity condition using M\"obius inversion,
\begin{equation}
F(t)=\frac{G_d}{t(1-V_dr^d)}\sum_{C=1}^\infty\sum'_{{\bf l}\in{\cal L}^*}
\mu(C)CLu_2((CL)^{-1}-2r)
\end{equation}
where the prime indicates that the sum excludes the zero vector.  We
take out the geometric factor and use a Mellin transform on the rest to 
extract the small $r$ behaviour
\begin{equation}\label{e:contour}
F(t)=\frac{G_d}{t(1-V_dr^d)}\frac{1}{2\pi i}\int_{c-i\infty}^{c+i\infty}
\frac{4r^{-s}E((s+1)/2;{\cal L}^*)ds}{2^{s+1}s(s+1)(s+2)\zeta(s+1)}
\end{equation}
where $E(s';{\cal L}^*)$ is the Epstein zeta function for the dual lattice,
\begin{equation}
E(s';{\cal L}^*)=\sum'_{{\bf l}\in{\cal L}^*}L^{-2s'}
\end{equation}

The purpose of the Mellin transform is to express the contour integral as
a sum of residues by Cauchy's theorem, closing the contour to the left.
This sum over poles directly gives an expansion in powers of $r$ (also
logarithms if the poles are not single).  It naturally depends on being
able to show that there exists a sequence of roughly semicircular contours
for negative real part of $s$ and avoiding poles of the integrand,
with integral converging to zero; we are unable to prove this here.

The integrand includes the Riemann zeta function $\zeta(s)$ in the denominator,
so the powers of $r$ appearing in the residue sum depend on the locations of
the zeros of $\zeta(s)$.  The celebrated Riemann Hypothesis~\cite{Co}
asserts that the complex zeros lie on the line with real part of $s$
equal to $1/2$; the zeros on the real axis are all known.  

We also need some analytic properties of the Epstein zeta function in the
numerator; see~\cite{SS}. The function $E(z,{\cal L}^*)$ for arbitrary lattice
of full dimension ${\cal L}^*$ has an analytic
continuation in the whole complex plane except for a single pole at $z=d/2$
with residue $S_{d-1}/2$, a special value $E(0;{\cal L}^*)=-1$ and
zeros at all negative integers.  The following statements~\cite{St}
show how it generalises the Riemann zeta function $\zeta(s)$:
Considering the (self-dual) cubic lattice ${\cal L}={\cal L}^*=\mathbb{Z}^d$, we have
$E(s';\mathbb{Z})=2\zeta(2s')$ and $E(s';\mathbb{Z}^2)=4\zeta(s')\beta(s')$
where $\beta$ is the Dirichlet $\beta$-function, that is, the nontrivial
Dirichlet $L$-function modulo 4.
In four dimensions we have $E(s';{\mathbb Z}^4)=8(1-2^{2-2s'})\zeta(s')\zeta(s'-1)$,
which is already enough to show that the Epstein zeta functions do not
all satisfy a Riemann hypothesis with all complex zeros on a
single vertical line in the complex plane; note that Ref.~\cite{St}
has $2^{2-s'}$ (a misprint).  There is no factorisation for the
physically interesting case of three dimensions, as for example there
are lattice vectors with lengths $\sqrt{3}$ and $\sqrt{5}$ but not
the product $\sqrt{15}$.

The above integral has poles at $s=d-1,-1,-2$ and at complex values
with real part $-1/2$ due to the $\zeta(s+1)$ in the denominator (also
elsewhere if the Riemann Hypothesis is false). Assuming that the contour
can be continued to the left, avoiding poles of the Riemann zeta function,
we find for the $d$-dimensional cubic lattice,
\begin{equation}\label{e:lead}
tF(t)=\frac{\pi^{(d-1)/2}}{2^dd\Gamma((d+3)/2)\zeta(d)r^{d-1}}+O(r^{1/2-\delta})
\end{equation}
where the leading coefficient is the same as found for the Boltzmann-Grad
limit in~\cite{MS2}, and the next term is valid for all $\delta>0$ if the
Riemann Hypothesis is true.  Numerically this term is small in magnitude for
$r$ not too small; the next term
\[ \left[\frac{S_{d-2}V_d}{2^{d-1}(d^3-d)\zeta(d)}-8G_d\right] r \]
is quite visible; see Fig.~\ref{f:sub}.  This argument with the contour
is similar to that of the survival probability of the circular billiard, Ref.~\cite{BD}, and as in that case suggests an equivalence of
Eq.~\ref{e:lead} to the Riemann Hypothesis, requiring a further careful
argument.

\begin{figure}
\includegraphics[width=450pt, height=250pt]{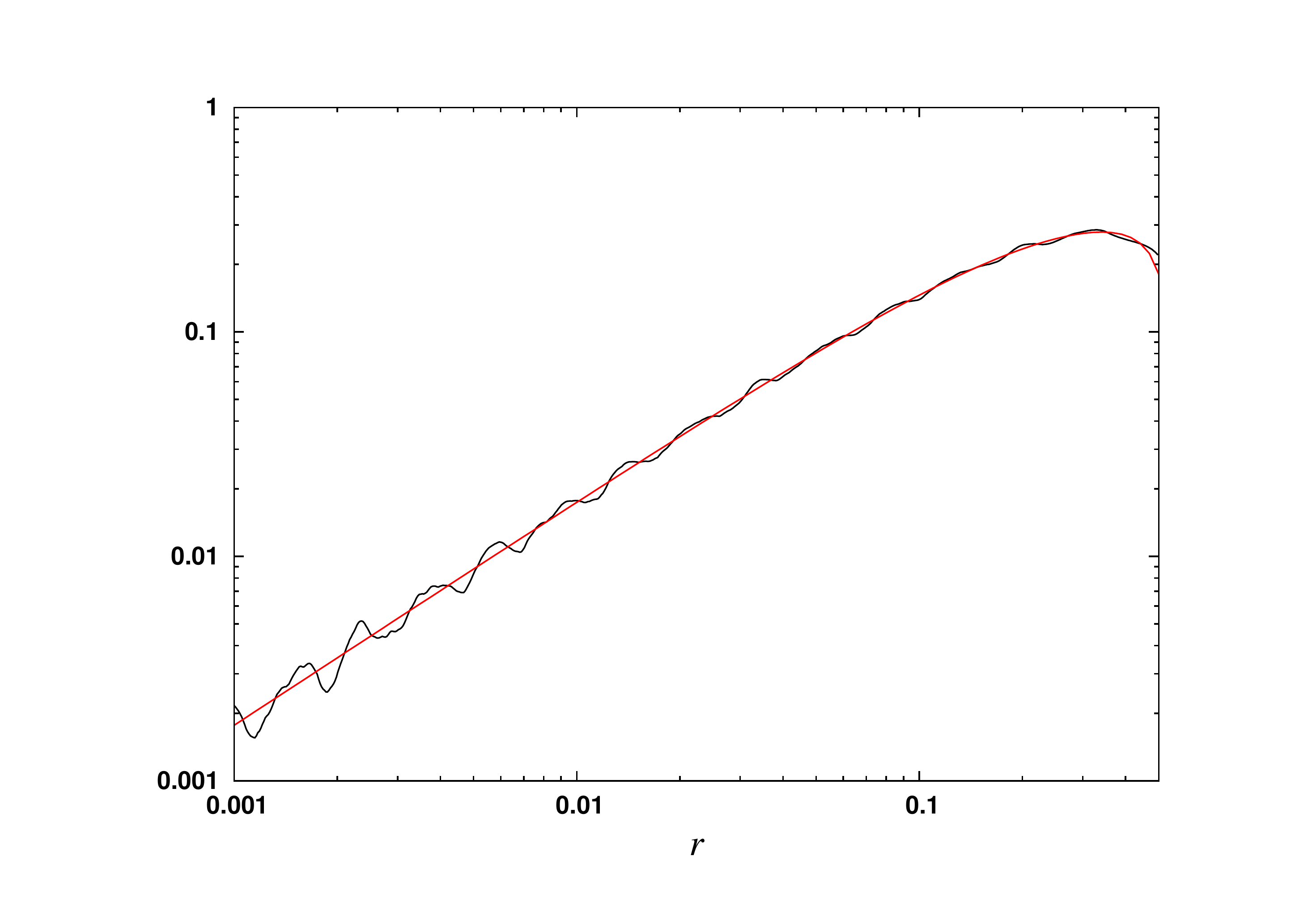}
\caption{\label{f:sub} The sum in Eq.(\protect\ref{e:sum}) minus the leading term in
Eq.~(\protect\ref{e:lead}), compared with the series obtained by summing the residues of poles on the real axis in Eq.~(\protect\ref{e:contour}), the smooth curve.  The smooth terms give a good approximation for the range of $r$ considered;
for smaller $r$ the complex poles coming from the zeta function dominate.}
\end{figure}

In the case of two or more scatterers per unit cell with rationally located
centres and common denominator $q$, the same calculation can be performed.
The Mellin transform then gives an expression of the form
\begin{equation}
tF(t)=\sum_\chi f_\chi(s)\sum'_{{\bf l}\in{\cal L}^*}
\frac{\chi({\bf l}\cdot{\bf m})}{L^{s+1}}
\sum_{C=1}^\infty \frac{\mu(C)\chi(C)}{C^{s+1}}
\end{equation}
where $\chi$ is a Dirichlet character mod $q$, $f_\chi(s)$ and $\bf m$
are explicitly known, the sum over $C$ can be expressed in terms of
known Dirichlet $L$-functions, relating transport in this Lorentz
gas to the generalised Riemann
Hypothesis. The sum over ${\bf l}$ gives Dirichlet-twisted
Epstein zeta-functions.  The latter may be difficult to
characterise in $d=3$ due to the lack of a factorisation of the
untwisted Epstein zeta function.

\section{Incipient horizons and the critical dimension}\label{s:inc}
The results of the last section do not apply directly to the case of
incipient principal horizons, as their width is zero, and they are
associated with an infinite number of horizons of the next lower
dimension.  There is however at least one useful piece of information
to be gained, which we now describe.

Lemma~\ref{l:horsum} shows that the contribution to $F(t)$ from a horizon
of dimension $d-2$ is of order $t^{-2}$.  A finite number of these
contributions are thus of the same order, however a more careful calculation
is required for the present case, where there are an infinite number
of contributions.

We consider a single example, but one which is likely to have more general implications.  Consider the Lorentz gas with a cubic lattice
of arbitrary dimension $d>2$ and $r=1/2$.  The scatterers just touch, leading
to incipient horizons in the coordinate planes and no other principal horizons.
In one of these planes, for example at $x_d=1/2$ for the $d$-th coordinate,
the cross-section of the scatterers are reduced to the points of $\mathbb{Z}^{d-1}$ in the other $d-1$ coordinates,
equivalent to the problem in the previous section, with dimension $d-1$ and $r=0$.
Thus there are horizons of dimension $d-2$ corresponding to all non-zero
lattice vectors ${\bf l}\in\hat{\mathbb{Z}}^{d-1}$ modulo inversion.
For any given lattice vector, there will also be a finite
extent in the $d$-th coordinate direction, which we now quantify.

The space $H_x^\perp$ is two dimensional, in the plane spanned by
$\bf l$ and ${\bf e}_d$, the unit vector in the $d$-th coordinate.
Projected orthogonally into this plane, the lattice has basis vectors
${\bf l}/L^2$ (hence of magnitude $L^{-1}$, where $L=|{\bf l}|$, consistent
with previous notation) and ${\bf e}_d$. This means $H_x^\perp$ consists of
the area confined between four circles of radius $r=1/2$
and centred at $(0,0), (L^{-1},0), (0,1), (L^{-1},1)$ with respect to the
orthonormal basis $\{{\bf l}/L,{\bf e}_d\}$; we will denote coordinates
with respect to this basis as $(x,y)$. Pairs of scatterers separated by
$L^{-1}$ overlap (except when $L=1$) and pairs displaced by $1$ just touch.
The region not within radius $r=1/2$ of the centre of any scatterer is
contained in the rectangle defined by $0<x<L^{-1}$ and
$\frac{1}{2}\sqrt{1-L^{-2}}<y<1-\frac{1}{2}\sqrt{1-L^{-2}}$.  In the limit
as $L\to\infty$ the height in the $y$ direction is $L^{-2}/2$ to leading order.
In this limit, we scale in both directions, $x-L^{-1}/2=L^{-1}\xi$ and
$y-1/2=L^{-2}\eta$ and keep only the highest order nontrivial terms
in the equation, and find that $H_x^\perp$ is defined by a limiting
shape bounded by four parabolas,
\begin{equation}\label{e:dia}
|\eta|<(|\xi|-1/2)^2,\qquad |\xi|<1/2
\end{equation}
This region is plotted in Fig.~\ref{f:dia}.
There is thus a limiting nonzero probability for two points in the
region to be mutually ``visible'', ie for which the line joining the
points remains in the region.  

\begin{figure}
\includegraphics[width=450pt, height=250pt]{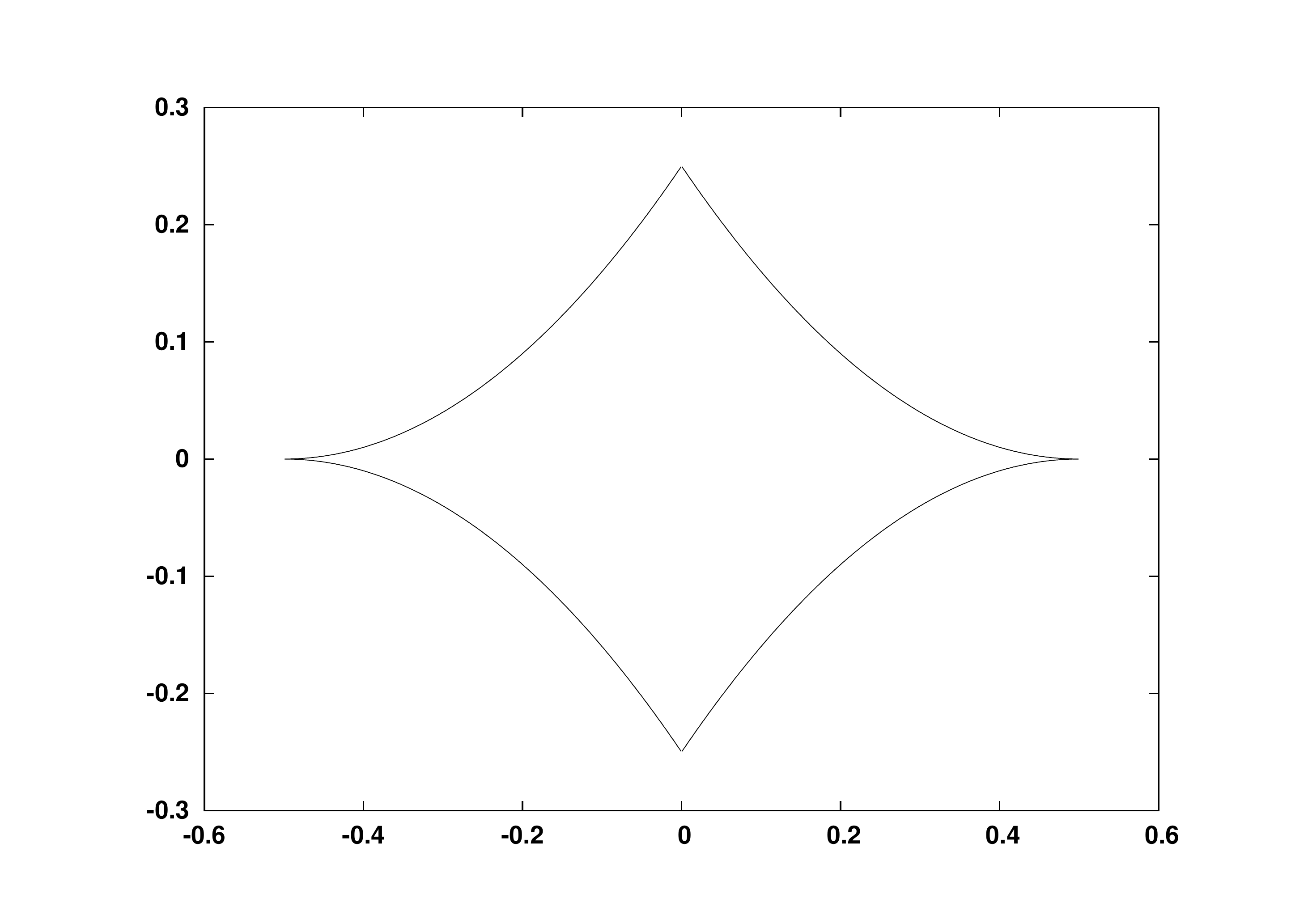}
\caption{\label{f:dia} The region defined by Eq.~\protect\ref{e:dia}.}
\end{figure}

The double integral over $\int_{H_x^\perp}$ in Lemma 2 is thus
asymptotic to a constant (numerically found to be approximately
$0.02746$) multiplied by $L^{-6}$ as $L\to\infty$.
The extra factor of $L$ in Lemma~\ref{l:horsum} shows that
\begin{equation}
F_H(t)\sim \frac{C_d}{t^2L^5}
\end{equation}
for some constant $C_d$ containing only geometrical factors, in
the appropriate limit (first $t\to\infty$, then $L\to\infty$).
Summing over ${\bf l}\in\hat{\mathbb{Z}}^{d-1}$ gives the Epstein
zeta function $E(5/2;\mathbb{Z}^{d-1})/\zeta(5)$.  This is a convergent series
for $d\leq 5$ and divergent for $d\geq 6$.  Thus $d=6$ is a critical
dimension for these incipient horizons.  The divergence of the sum
indicates that $F(t)$ has a slower decay with $t$ than $t^{-2}$ but
a detailed calculation is required, taking into account the overlap
of the relevant horizons, to yield a precise form.  It is unlikely
that the result would reach that of the non-incipient principal
horizon, that is, $t^{-1}$; we propose Conj.~\ref{c:inc} above.

Numerical tests of this conjecture are somewhat difficult due to the
low packing fraction in high dimensions (so that individual
trajectories survive for long times) and slow convergence to the long
time limit.  Scatterer radii $r=0.4$ and $r=0.6$ were discussed previously;
see Figs.~\ref{f:c4} and~\ref{f:c6}.   For a scatterer of radius $0.5$ and
dimension $d\geq 6$ the exponent itself is unknown; results are shown in
Fig.~\ref{f:c5} with fitted exponent in Tab.~2, and show exponents greater
than $2$ for low dimensions due to the finite times involved.  

\begin{figure}
\includegraphics[width=450pt, height=250pt]{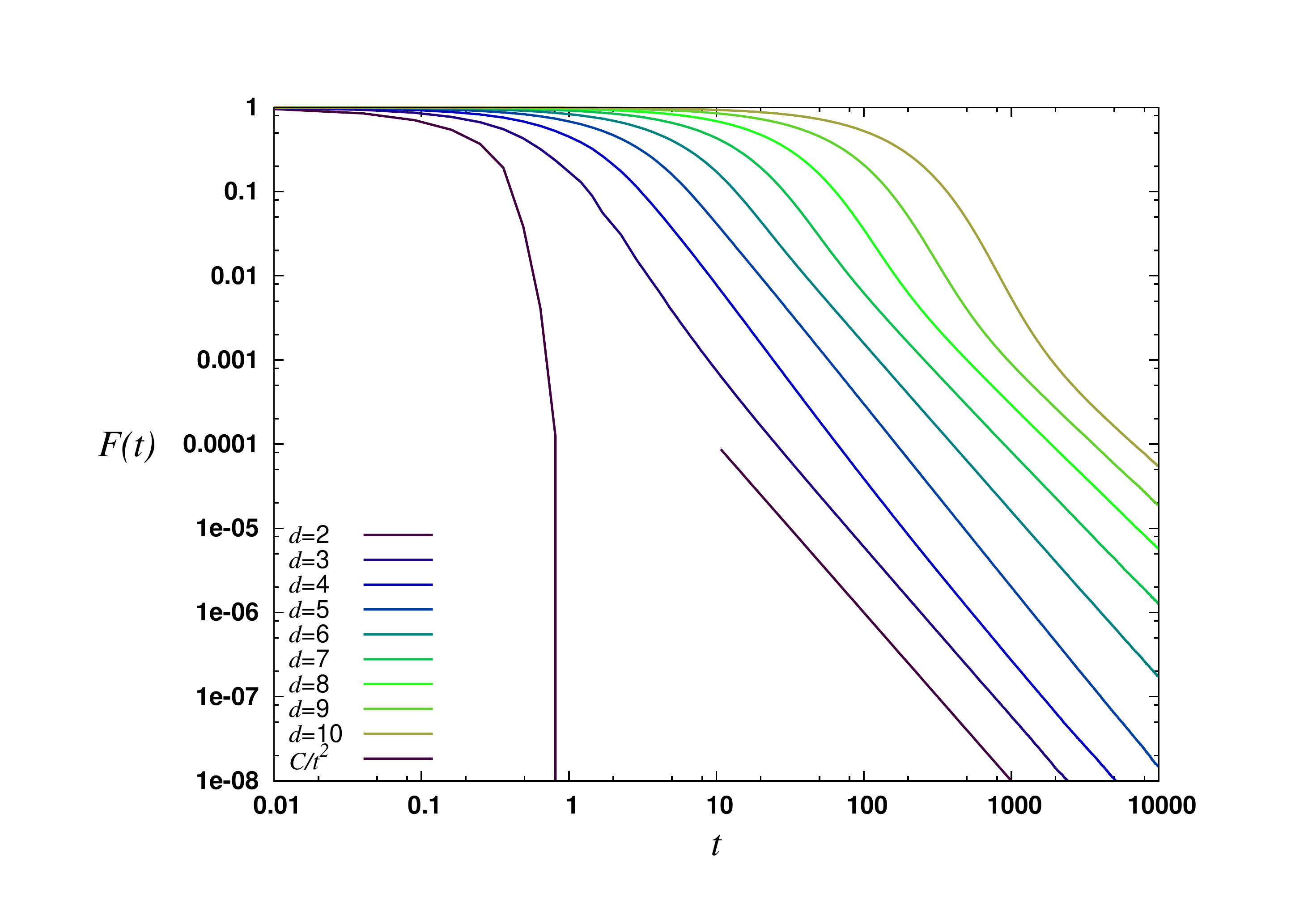}
\caption{\label{f:c5} The free flight survival probability for dimensions
$2\leq d\leq 10$ for $r=0.5$ (touching) 
scatterers, showing escape at finite time for $d=2$, approach to $C/t^2$ for $2<d<5$ and slower
decay for $d\geq 6$ as predicted by theory; fits for the relevant exponents are given in
Tab.~\protect\ref{t:exp}.}
\end{figure}

\begin{table}
\centerline{
\begin{tabular}{|c|ccccccccc|}\hline
$d$&3&4&5&6&7&8&9&10&\\\hline
$n=10^8$&2.07&2.17&2.19&1.99&1.82&1.70&1.64&1.58&\\
$n=10^9$&2.02&2.20&2.17&1.98&1.80&1.67&&&\\
$n=10^{10}$&2.03&2.08&2.14&1.95&&&&&\\\hline
\end{tabular}
}
\caption{\label{t:exp} Numerical estimate of the exponent $\alpha$ from Conjecture~\ref{c:inc},
for cubic Lorentz gases with $r=1/2$, sample size $n$, and fitted
for free flight survival probabilities $10^2/n<F(t)<10^4/n$.}
\end{table}

\section{Correlations and diffusion}\label{s:diff}
The above expression for $F(t)$ is also relevant to correlation functions,
since conjecture~\ref{c:corr} asserts that
following a long flight, correlations decay very rapidly.  For
describing diffusion, we need velocity correlation functions.
Figs.~\ref{f:fit} and~\ref{f:corr} show the velocity autocorrelation function
(used below) compared with the sum of horizons predicted by
Conj.~\ref{c:corr} in the case of a three dimensional cubic
Lorentz gas.

\begin{figure}
\centerline{\includegraphics[width=350pt]{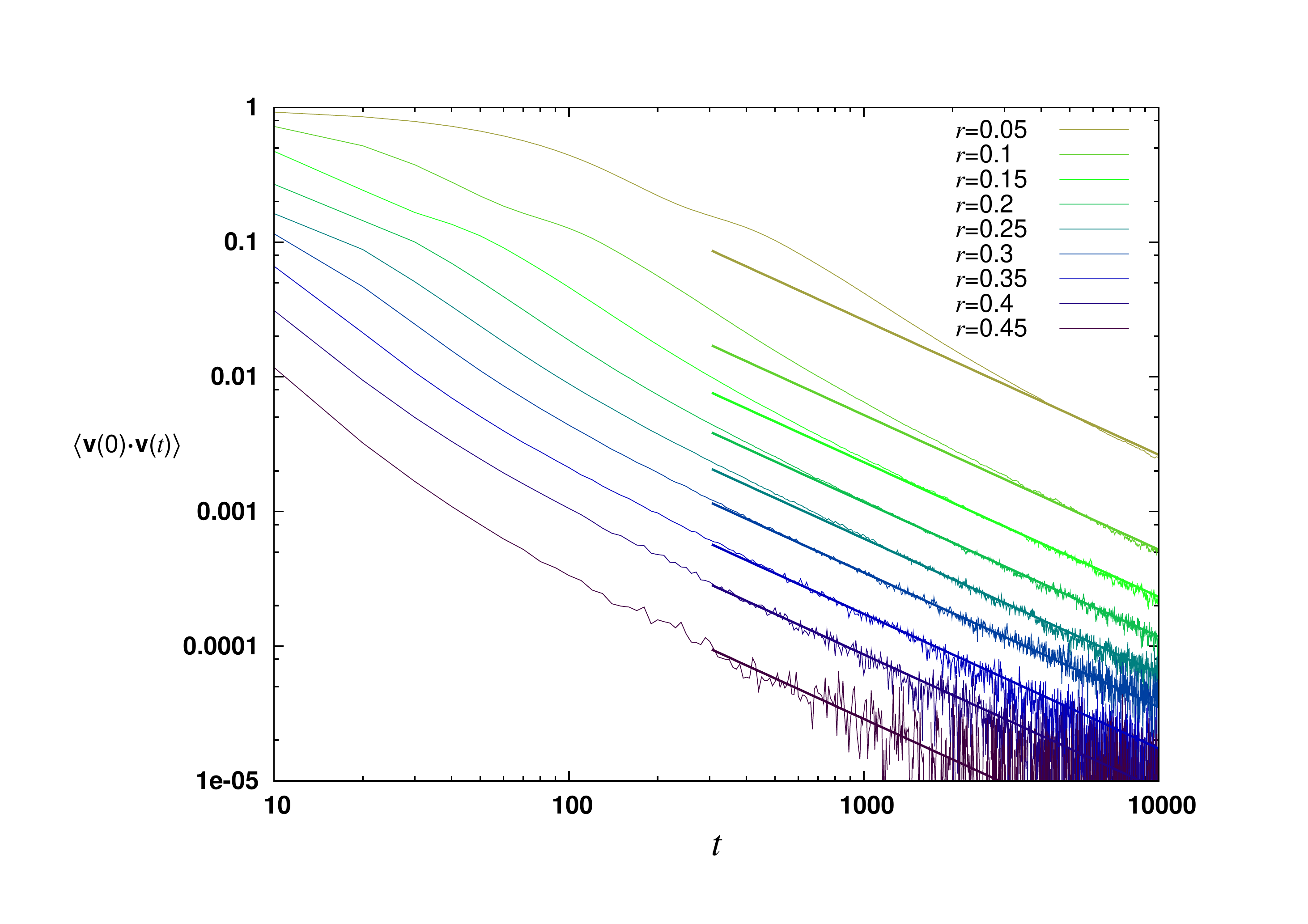}}
\caption{\label{f:fit} For $r<0.5$, Eq.~\protect\ref{e:sum} gives the
limit of $tF(t)$ as $t\to\infty$ as a sum over horizons, assuming
Conj.~\protect\ref{c:hor}. Conj.~\ref{c:corr} applied to the components
of the velocity asserts that the same limit is given by
$t\langle{\bf v}_0\cdot{\bf v}_t\rangle$ related to the
diffusion by Eq.~(\ref{e:GK}). Here this correlation function
is shown with a fit to the form $C_r/t$, for the three dimensional cubic
Lorentz gas.}
\end{figure}

\begin{figure}
\centerline{\includegraphics[width=350pt]{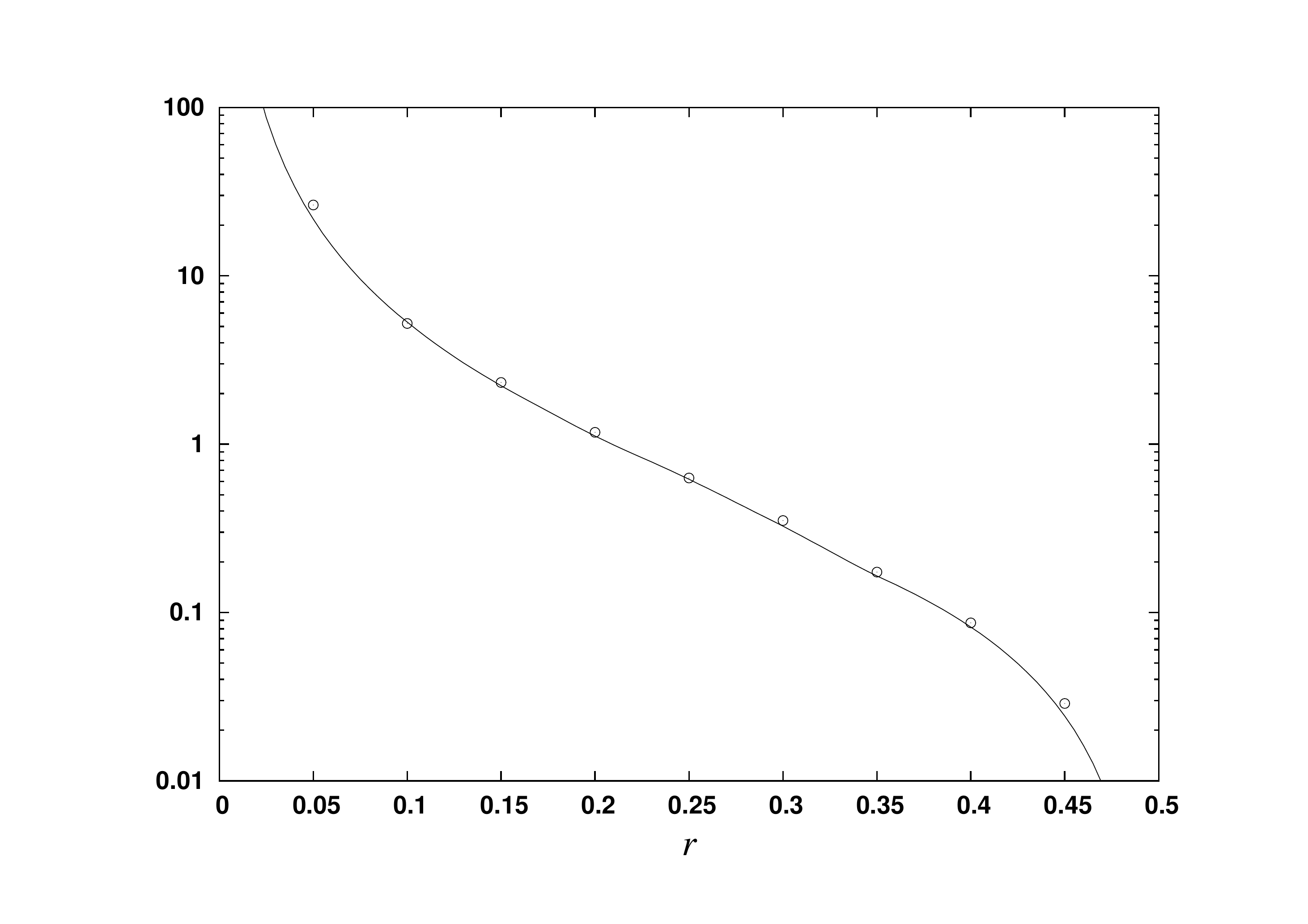}}
\caption{\label{f:corr} See Fig.~\protect\ref{f:fit}: Comparison of the
fitted $C_r$ (single points) with $\lim_{t\to\infty} t F(t)$ from Eq.~\ref{e:sum}
(smooth curve).}
\end{figure}

Recalling the discussion in previous sections, we are interested in
the case of nonincipient principal horizons for which the function
$F(t)$ decays as $C/t$; only in this case do we expect Eq.~(\ref{e:GK})
to give superdiffusion, ie a nonzero value of $\cal D$.  As discussed
for $F(t)$, however, the coefficient is explicit.
Now we give the proof of Thm.~\ref{th:D}

\begin{proof}
Assuming conjecture~\ref{c:corr}, the velocity correlations are determined to
leading order by the integral over orbits that do not reach a scatterer
in the given time, that is, $M_t$.  For these orbits, $\bf v$ is a
constant, so conjecture~\ref{c:hor} can be applied.  We then
use the same method as in the proof of Lemma 2, except that the integrand
varies significantly over the ${\bf v}^{\parallel}$ space, ie $H_v$.  Given
that the perpendicular space is one-dimensional, we find a simple result in
terms of the width of the horizon $w_H$ defined at the beginning
of Sec.~\ref{s:princ}:
\begin{equation}
\langle v_i(0)v_j(t) \rangle\sim\sum_{H\in{\cal H}}
\frac{w_H^2\int_{H_v}v_iv_j}
{tS_{d-1}{\cal V}_H^\perp(1-{\cal P})}
\end{equation}
as $t\to\infty$.  Now $H_v$ consists of all unit vectors $\bf v$ perpendicular to a
unit vector $\bf n$, which depends on the horizon $H$ but we will suppress this
in the notation.  This $\bf n$ is parallel to a vector in ${\cal L}^*$ as
discussed previously.
Switching to a coordinate system based on $\bf n$ we can evaluate the
integral to give
\begin{equation}
\int_{H_v}v_iv_j=(\delta_{ij}-n_in_j)V_{d-1}
\end{equation}
The cases $d>2$ and $d=2$ need to be derived separately, but give the
same result.  Putting this together we find the stated result.
\end{proof}

For the case of the cubic lattice, the superdiffusion matrix is
isotropic.  For $1/(2\sqrt{2})<r<1/2$ we have only $d$ horizons
of the form $(1,0,0,\ldots)$ and find that ${\cal D}_{ij}={\cal D}\delta_{ij}$
with
\begin{equation}
{\cal D}=\frac{(1-2r)^2}{1-V_dr^d}
\frac{\Gamma(d/2)}{\sqrt{\pi}\Gamma((d-1)/2)}
\end{equation}
while for $1/(2\sqrt{3})<r<1/(2\sqrt{2})$ there is an additional term
coming from the $d(d-1)$ horizons of the form $(1,\pm 1,0,\ldots)$,
which is thus larger by a factor of order $d$.  In general this means
that the large $d$ asymptotics at fixed $r$ has a different form every
time a new horizon is introduced.

For the limit of small scatterers and fixed $d$ the sum over horizons
is exactly the same as for $F(t)$ in Sec.~\ref{s:princ}, and we can
directly combine Eqs.~(\ref{e:DF},\ref{e:lead}) to get
\begin{equation}
{\cal D}=\frac{\pi^{(d-1)/2}}
{2^{d}d^2\Gamma((d+3)/2)
\zeta(d)r^{d-1}}+O(r^{1/2-\delta})
\end{equation}
for $\delta>0$.  Again it appears that this statement is equivalent to the Riemann
Hypothesis under the same caveat (convergence of a sequence of
contour integrals) as in Sec.~\ref{s:princ}.

We can also compute the diffusion numerically, however even for the
lowest dimensions, normal diffusion unrelated to the horizons dominates
at accessible timescales; see Fig.~\ref{f:diff}.

\begin{figure}
\includegraphics[width=450pt, height=250pt]{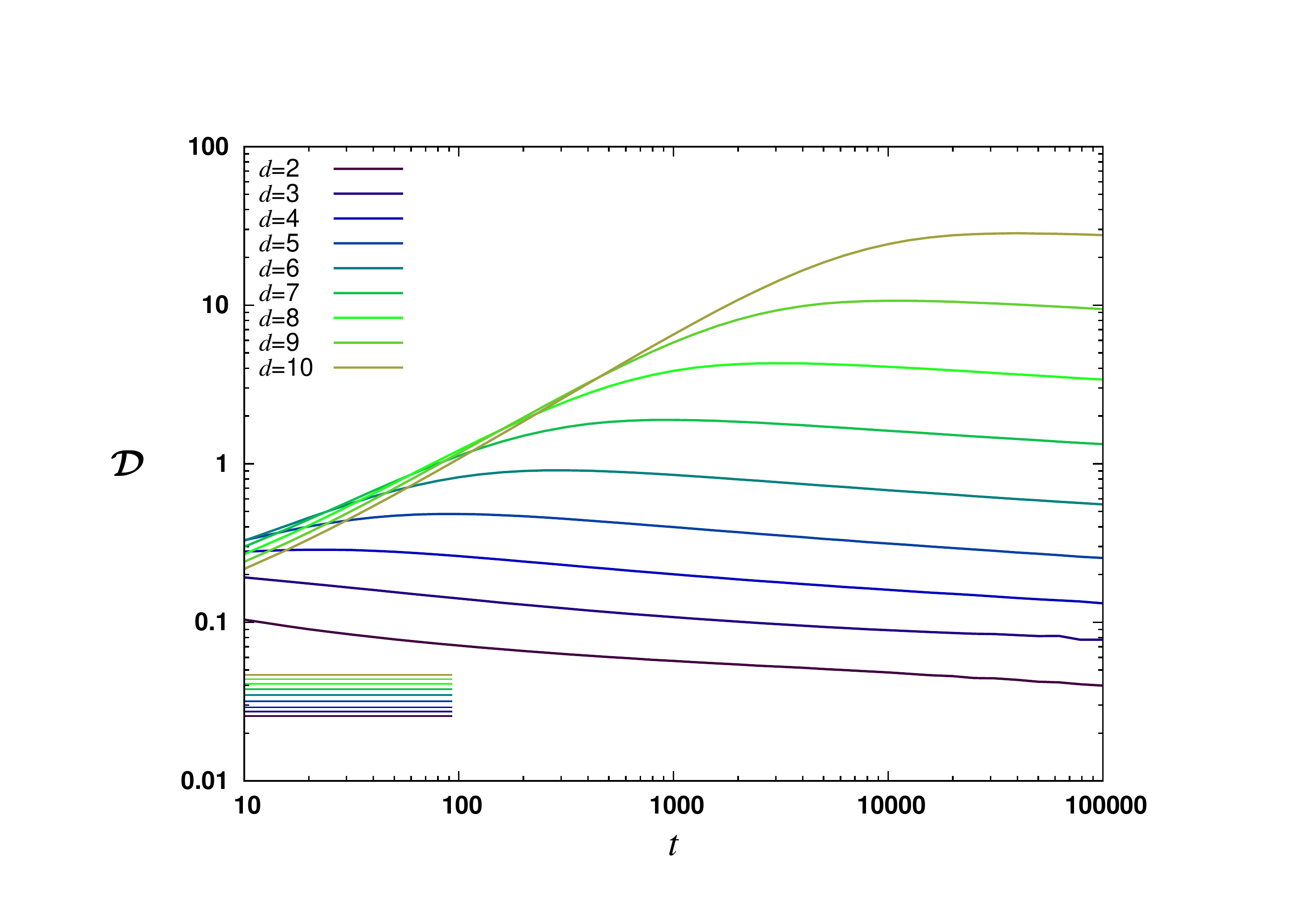}
\caption{\label{f:diff} The superdiffusion coefficient ${\cal D}$ 
numerically (dotted lines) and theoretically (horizontal lines).  At 
accessible timescales normal diffusion processes dominate.}
\end{figure}

\section{Diffusion in discrete time}\label{s:disc}
In order to compare our explicit expressions with other literature, we
note that~\cite{SV} uses exclusively discrete time properties (using $n$
rather than $t$ in the definitions), while~\cite{B,CD} link discrete to
continuous time preperties. Recall the discussion in Sec.~\ref{s:DXi}:
${\cal D}^{\rm disc}$ is formally infinite, however we can discuss the
covariance of the limiting distribution, which is related
to the superdiffusion coefficient by $\Xi={\cal D}$ and to its discrete
time version by $\Xi^{\rm disc}=\tau\Xi$.  
The mean free time $\tau$ is given by~\cite{C}
\begin{equation}
\tau=\frac{1-{\cal P}}{|\partial\Omega\cap E|}\frac{S_{d-1}}{V_{d-1}}
\end{equation}
Here  $|\partial\Omega\cap E|$ is the measure of the boundary of the
scatterers, equal to $S_{d-1}r^{d-1}$ in the case of a single scatterer per unit
cell, without overlapping.  Thus in the latter case we find
\begin{equation}
\tau=\frac{1-V_dr^d}{V_{d-1}r^{d-1}}
\end{equation}
which when combined with Thm.~\ref{th:D} and the above relations gives a simpler prefactor:
\begin{equation}
\Xi^{\rm disc}_{ij}=\frac{1}{r^{d-1}S_{d-1}}\sum_{H\in{\cal H}}\frac{w_H^2}{{\cal V}_H^\perp}
(\delta_{ij}-n_i(H)n_j(H))\;\;.
\end{equation}

\section*{Acknowledgements}
The author thanks Domokos Sz\'asz for suggesting the problem of diffusion
in high dimensional Lorentz gases and for helpful discussions; acknowledges
funding from the Royal Society and the organisers of ``Chaotic and Transport
Properties of Higher-Dimensional Dynamical Systems'' (Cuernavaca, Jan 2011),
where some ideas were further developed; and is also grateful to Leonid 
Bunimovich, Nikolai Chernov, Orestis Georgiou, Jens Marklof, Ian Melbourne,
Zeev Rudnick, David Sanders and Trevor Wooley for helpful discussions.

\end{document}